\documentclass[11pt,a4paper]{article}
\usepackage[top=25mm,bottom=35mm,left=20mm,right=20mm,truedimen]{geometry}

\usepackage{amsmath,newtxtext,newtxmath,bm}

\usepackage{amsthm}
\usepackage{indentfirst}
\usepackage{mathtools}
\usepackage{algorithmic}
\usepackage{algorithm}
\usepackage{makecell}
\usepackage{multirow}
\usepackage{threeparttable}
\usepackage{booktabs}

\usepackage{color}

\newtheorem{defn}{\sffamily{Definition}}[section]

\newtheorem{proposition}[defn]{\sffamily{Proposition}}
\newtheorem{corollary}[defn]{\sffamily{Corollary}}

\theoremstyle{definition}
\newtheorem{example}{\sffamily{Example}}[section]

\numberwithin{algorithm}{section}
\numberwithin{equation}{section}

\allowdisplaybreaks

\title{
  Factorized Krylov subspace methods for solving large Sylvester equations
  }
\author{
    Yuki Satake\thanks{Information Initiative Center, Hokkaido University, Sapporo, Japan. Email: \texttt{\{satake, fukaya\}@iic.hokudai.ac.jp}}\and 
    Takeshi Fukaya$^\ast$, \and
    Tomohiro Sogabe\thanks{Graduate School of Engineering, Nagoya University, Nagoya, Japan. Email: \texttt{\{sogabe, zhang\}@na.nuap.nagoya-u.ac.jp}} \and
    Shao-Liang Zhang$^\dagger$ 
}
\date{}

\begin{document}
  \maketitle
  \begin{abstract}
    Krylov subspace methods, such as the Conjugate Gradient (CG) and BiCGSTAB methods, are widely used in scientific computing for solving linear systems. 
    In this study, we propose a new framework for solving large Sylvester equations in a low-rank format by reconstructing matrix-oriented Krylov subspace methods. 
    The framework realizes efficient algorithms that are mathematically equivalent to the matrix-oriented Krylov subspace methods by exploiting the mathematical properties of the Sylvester operator and the low-rank structure of the right-hand side. 
    Specifically, by leveraging these properties, approximate solutions can be expressed in a low-rank factorized form, enabling efficient computation and reduced memory requirements.
    The effectiveness of our algorithms is demonstrated through numerical experiments.
  \end{abstract}

  \section{Introduction}\label{sec:intro}
%% Target, Application
In this paper, we consider the Sylvester matrix equation
\begin{equation}
    AX + XB = C_1C_2^\top,\label{eq:sylvester}
\end{equation}
where $A\in\mathbb{R}^{n\times n}$, $B\in\mathbb{R}^{m\times m}$, $C_1\in\mathbb{R}^{n\times s}$, and $C_2\in\mathbb{R}^{m\times s}$ are given, and $X\in\mathbb{R}^{n\times m}$ is to be determined.
The Sylvester equation~\eqref{eq:sylvester} arises in many scientific fields, such as image restoration, control theory, and model reduction~\cite{calvetti1996,datta2004,sorensen2002}. 
The discretization of PDEs also yields Sylvester equations, e.g.,~\cite{palitta2016}.
When $B=A^\top$, Eq.~\eqref{eq:sylvester} is called the Lyapunov equation, which appears in various fields, including control theory and model reduction~\cite{antoulas2005,gajic2008}.

%& Existing studies (small ~ middle)
When the matrix sizes are small, the Bartels–Stewart algorithm~\cite{bartels1972} is commonly used to solve the Sylvester equation~\eqref{eq:sylvester}.
The algorithm transforms Eq.~\eqref{eq:sylvester} based on the Schur decompositions of $A$ and $B$, and computes the solution by performing backward substitutions on a sequence of triangular systems.
Other approaches have also been developed, including the Hessenberg–Schur method~\cite{golub1979} and a variant of the Bartels–Stewart algorithm~\cite{Sorensen2003}.

%% Existing studies (large)
When both $A$ and $B$ have large dimensions, memory consumption becomes a critical problem. 
Even if $A$ and $B$ are sparse, the solution matrix $X$ is generally dense, making it impractical to store all of its elements in very large-scale cases. 
However, when the right-hand side is a low-rank matrix, i.e., when $s\ll m,n$, the singular values of $X$ are expected to decay exponentially fast, and in this case, $X$ can be well-approximated by a low-rank matrix.
Such low-rank right-hand sides frequently arise in various application areas, including control theory and model reduction. 
For these cases, memory-efficient methods that compute low-rank approximate solutions—such as the ADI methods~\cite{benner2014,benner2009,benner2023} and projection methods~\cite{elguennouni2002,heyouni2010,hu1992,kressner2021,simoncini2007}—have been developed.
For more details of the existing methods, see~\cite{simoncini2016} and references therein.

%% Background
Another possible approach is to apply Krylov subspace methods—such as the conjugate gradient (CG)~\cite{hestenes1952} and BiCGSTAB~\cite{vandervorst1992} methods—to the Sylvester equation since it can be regarded as a certain type of linear system.
This can be observed by vectorizing~\eqref{eq:sylvester}, which yields the following equivalent linear system:
\begin{equation}
    \left(I_m\otimes A + B^\top \otimes I_n\right)\bm{x} = \bm{c}, \label{eq:ls}
\end{equation}
where $I_n$ is the $n\times n$ identity matrix, $\otimes$ denotes the Kronecker product, $\bm{x}:=\mathrm{vec}(X)\in\mathbb{R}^{mn}$, and $\bm{c}:=\mathrm{vec}\left(C_1C_2^\top\right)\in\mathbb{R}^{mn}$.
The vec operator, $\mathrm{vec}:\mathbb{R}^{n\times m}\to\mathbb{R}^{mn}$, converts a matrix into a column vector by stacking the columns one on top of each other (see~\cite[p. 63]{sogabe2022}).
The Krylov subspace methods designed for linear matrix equations, including the Sylvester equation, are referred to as matrix-oriented Krylov subspace methods.
These methods perform all vector operations that arise in the Krylov subspace methods for vectorized formulation, such as~\eqref{eq:ls}, in matrix form.
%% Problems
Unfortunately, standard matrix-oriented Krylov subspace methods require storing all elements of the solution matrix, which makes them impractical for large-scale problems. 
One possible remedy is to employ truncation techniques to store the approximate solution in a low-rank format rather than as a full dense matrix.
In this approach, small singular values are discarded at each iteration to maintain the low-rank structure of the approximate solution.
Matrix-oriented Krylov subspace methods with low-rank truncation (hereafter referred to as the truncated Krylov subspace methods) have been developed and shown to be effective, particularly for more general linear matrix equations such as multiterm matrix equations (see~\cite{benner2013,kressner2014,kressner2011,palitta2021,simoncini2023}). 
More recently, the subspace-conjugate gradient method, which is a further development of the truncated CG method, has been proposed in~\cite{palitta2025} and exploits richer subspace information by replacing the scalar coefficients in the CG method with small matrix coefficients. 
However, in these approaches, low-rank truncations are needed in each iteration, which can be computationally expensive.

%% Our study
Motivated by the aforementioned issues, we investigate theoretical aspects of the matrix-oriented Krylov subspace methods and redesign the algorithms to enhance their practical feasibility for large-scale problems.
We exploit the fact that each matrix arising in these methods can be represented in a low-rank factorized form.
This form is expressed as a factorization into the product of a column-orthonormal matrix, a small square matrix, and a row-orthonormal matrix, as in the projection methods.
Using such a factorized representation, fundamental operations implemented in the matrix-oriented Krylov subspace methods---such as the CG or BiCGSTAB method---can be computed efficiently.
We name the redesigned algorithms the \textit{factorized Krylov subspace methods}.
Importantly, unlike the existing projection methods, our approach does not require explicitly solving projected equations using direct solvers such as the Bartels-Stewart algorithm.
Instead, it leverages the recurrence relations inherent in the CG or BiCGSTAB method to update a small matrix at each iteration.
Although the rank of the approximate solution increases as the iterations proceed, a key advantage of our approach is that truncation is not required at every iteration.
Our numerical experiments confirm that this leads to a significant reduction in computational cost.
As in other existing methods for solving large matrix equations, we also assume that the right-hand side is low-rank, i.e., $s\ll m,n$.

%% Organization 
The remainder of this paper is organized as follows. Section~\ref{sec:existing} reviews the matrix-oriented Krylov subspace methods for the Sylvester equation. 
Section~\ref{sec:math} analyzes their mathematical structure and reformulates fundamental operations in a low-rank framework. Section~\ref{sec:main} introduces the proposed factorized Krylov subspace methods and details specific algorithms. 
Section~\ref{sec:experiments} reports numerical experiments demonstrating the efficiency of the proposed methods. 
Section~\ref{sec:conclusions} concludes the paper.

Throughout this paper, $O_{s_1,s_2}$ denotes the $s_1\times s_2$ zero matrix, and $\|\cdot\|$ denotes the Frobenius norm.
The notation $A(1:i,1:j)$ denotes the $i\times j$ upper-left submatrix of $A$.

%% ================================================ %%
\section{Matrix-oriented Krylov subspace methods}
\label{sec:existing}
% ------- Intro. ------ %
This section provides a brief review of matrix-oriented Krylov subspace methods for solving the Sylvester equation~\eqref{eq:sylvester}.
These methods can be naturally derived by representing all $mn$-dimensional vectors in the Krylov subspace methods for the equivalent linear system~\eqref{eq:ls} as $n\times m$ matrices.
Such a bijective correspondence between an $mn$-dimensional vector and an $n\times m$ matrix is established via the vec operator. 
Using the correspondence, the fundamental vector operations in Krylov subspace methods for the linear system~\eqref{eq:ls}---namely, vector addition, inner product, and matrix-vector product (Sylvester operator)---can be expressed in terms of matrix operations.
To see this, let us consider two matrices $X,Y\in\mathbb{R}^{n\times m}$, and corresponding vectors $\bm{x}:=\mathrm{vec}(X)\in\mathbb{R}^{mn}, \bm{y}:=\mathrm{vec}(Y)\in\mathbb{R}^{mn}$.
Then the following relations hold.

\begin{itemize}
    \item Addition: 
    the vector addition $\bm{x}+\bm{y}$ corresponds to the matrix addition $X+Y$ via the vec operator:
    \begin{equation*}
        \bm{x}+\bm{y}=\mathrm{vec}(X+Y)\in\mathbb{R}^{mn}.
    \end{equation*}
    \item Inner product:
    the inner product of $\bm{x}$ and $\bm{y}$ is equal to the inner product of $X$ and $Y$ as defined below:
    \begin{equation*}
        \left\langle\bm{x}, \bm{y}\right\rangle:=\bm{x}^\top \bm{y}=\mathrm{tr}(X^\top Y)=:\left\langle X, Y\right\rangle.
    \end{equation*}
    \item Sylvester operator:
    the matrix-vector product of $\mathcal{F}:=I_m\otimes A + B^\top \otimes I_n\in\mathbb{R}^{mn\times mn}$ and $\bm{x}$ represents a vector form of the Sylvester operator, i.e.,
    \begin{equation*}
        \mathcal{F}\bm{x} = \mathrm{vec}(AX+XB).
    \end{equation*}
\end{itemize}
Taking the above correspondence into account, one can easily derive matrix-oriented Krylov subspace methods.
Algorithms~\ref{alg:CG} and~\ref{alg:BiCGSTAB} are examples of the matrix-oriented Krylov subspace methods for solving the Sylvester equation~\eqref{eq:sylvester}, namely the matrix-oriented CG method and the matrix-oriented BiCGSTAB method, respectively.

\begin{algorithm}
\caption{Matrix-oriented CG method for solving~\eqref{eq:sylvester}}
\label{alg:CG}
\begin{algorithmic}[1]
\REQUIRE $A\in\mathbb{R}^{n\times n},B\in\mathbb{R}^{m\times m},C(:=C_1C_2^\top)\in\mathbb{R}^{n\times m},\varepsilon_\mathrm{tol}$
\ENSURE $X_{k+1}\in\mathbb{R}^{n\times m}$
\STATE{Set initial guess $X_0$}
\STATE{Compute $R_0=C-AX_0-X_0 B$}
\STATE{$P_0=R_0$, $\rho_0=\langle R_0, R_0\rangle$}
\FOR{$k=0,1,2,\ldots$}
\STATE{$Q_{k} = AP_{k}+P_{k}B$}
\STATE{$\alpha_k=\rho_k/\langle P_k, Q_k \rangle$}
\STATE{$X_{k+1}=X_k + \alpha_k P_k$}
\STATE{$R_{k+1}=R_k - \alpha_k Q_k$}
\IF{$\|R_{k+1}\|\le \|R_0\|\cdot \varepsilon_\mathrm{tol}$}
\RETURN $X_{k+1}$
\ENDIF
\STATE{$\rho_{k+1}=\langle R_{k+1}, R_{k+1}\rangle$}
\STATE{$\beta_k=\rho_{k+1}/\rho_k$}
\STATE{$P_{k+1}=R_{k+1} + \beta_k P_k$}
\ENDFOR
% \RETURN $X_{k+1}$
\end{algorithmic}
\end{algorithm}

\begin{algorithm}
\caption{Matrix-oriented BiCGSTAB method for solving~\eqref{eq:sylvester}}
\label{alg:BiCGSTAB}
\begin{algorithmic}[1]
\REQUIRE $A\in\mathbb{R}^{n\times n},B\in\mathbb{R}^{m\times m},C(:=C_1C_2^\top)\in\mathbb{R}^{n\times m},\varepsilon_\mathrm{tol}$
\ENSURE {$X_{k+1}\in\mathbb{R}^{n\times m}$}
\STATE{Set initial guess $X_0$}
\STATE{Compute $R_0=C-AX_0-X_0 B$}
\STATE{Set an arbitrary matrix $\tilde{R}_0$ s.t. $\langle R_0,\tilde{R}_0\rangle\ne0$, e.g., $\tilde{R}_0=R_0$}
\STATE{$P_0=R_0$, $\rho_0=\langle R_0, \tilde{R}_0\rangle$}
\FOR{$k=0,1,2,\ldots$}
\STATE{$Q_{k} = AP_{k}+P_{k}B$}
\STATE{$\alpha_k=\rho_k/\langle \tilde{R}_0, Q_k \rangle$}
\STATE{$S_k=R_k - \alpha_kQ_k$}
\STATE{$T_{k} = AS_{k}+S_{k}B$}
\STATE{$\omega_k = \langle T_k,S_k\rangle/\langle T_k,T_k\rangle$}
\STATE{$X_{k+1}=X_k + \alpha_k P_k + \omega_k S_k$}
\STATE{$R_{k+1}=S_k - \omega_k T_k$}
\IF{$\|R_{k+1}\|\le \|R_0\|\cdot \varepsilon_\mathrm{tol}$}
\RETURN $X_{k+1}$
\ENDIF
\STATE{$\rho_{k+1}=\langle R_{k+1}, \tilde{R}_0\rangle$}
\STATE{$\beta_k=\alpha_k/\omega_k \cdot\rho_{k+1}/\rho_k$}
\STATE{$P_{k+1}=R_{k+1} + \beta_k (P_k - \omega_k Q_k)$}
\ENDFOR
% \RETURN $X_{k+1}$
\end{algorithmic}
\end{algorithm}

%% ================================================ %%
\section{Factorized representation of the Krylov subspace}\label{sec:math}
The original matrix-oriented Krylov subspace methods for~\eqref{eq:sylvester} involve operations with $n\times m$ dense matrices, which may lead to issues with computational cost and memory requirements for large cases.
To address such issues, we first discuss mathematical properties of the Krylov subspace 
\begin{equation}
    \mathcal{K}_k(\mathcal{F},\bm{c}):=\mathrm{span}\left(\bm{c},\mathcal{F}\bm{c},\mathcal{F}^2\bm{c},\ldots,\mathcal{F}^{k-1}\bm{c}\right)\label{v-KS}
\end{equation}
that underlie the matrix-oriented Krylov subspace methods for~\eqref{eq:sylvester}.
We then exploit the mathematical properties to derive a low-rank factorized form, and use it to reformulate the basic operations in the matrix-oriented Krylov subspace method.

\subsection{Mathematical discussion}\label{subsec:3-1}
For the Krylov subspace~\eqref{v-KS}, the following property holds:
\begin{proposition}\label{prop:sub}
    Let $\mathcal{F}:=I_m\otimes A + B^\top \otimes I_n\in\mathbb{R}^{mn\times mn}$ and $\bm{c}:=\mathrm{vec}(C_1C_2^\top)\in\mathbb{R}^{mn}$, where $A\in\mathbb{R}^{n\times n}, B\in\mathbb{R}^{m\times m}$, $C_1\in\mathbb{R}^{n\times s}$, and $C_2\in\mathbb{R}^{m\times s}$.
    Then, 
    \begin{equation}
        \mathcal{K}_k(\mathcal{F},\bm{c})\subseteq \mathcal{K}_k^\square(B^\top,C_2)\otimes \mathcal{K}_k^\square(A,C_1),
    \end{equation}
    where 
    $\mathcal{K}_k^\square(A,C_1)$ denotes the range of the matrix $[C_1,AC_1,A^2C_1,\ldots, A^{k-1}C_1]$ and $\mathcal{K}_k^\square(B^\top,C_2)$ denotes the range of the matrix $[C_2,B^\top C_2,(B^\top)^2C_2,\ldots, (B^\top)^{k-1}C_2]$.
    % \begin{align*}
    %     \mathcal{K}_k^\square(A,C_1)&:=\mathrm{range}\left([C_1,AC_1,A^2C_1,\ldots, A^{k-1}C_1]\right), \\
    %     \mathcal{K}_k^\square(B^\top,C_2)&:=\mathrm{range}\left([C_2,B^\top C_2,(B^\top)^2C_2,\ldots, (B^\top)^{k-1}C_2]\right).
    % \end{align*}
    Here, the Kronecker product of the subspaces $\mathcal{K}_k^\square(A,C_1)$ and $\mathcal{K}_k^\square(B^\top,C_2)$ is defined as 
    \begin{equation*}
        \mathcal{K}_k^\square(B^\top,C_2)\otimes \mathcal{K}_k^\square(A,C_1):=\mathrm{span}\{\bm{w}\otimes\bm{v} : \bm{v}\in\mathcal{K}_k^\square(A,C_1), \bm{w}\in\mathcal{K}_k^\square(B^\top,C_2)\}.
    \end{equation*}
\end{proposition}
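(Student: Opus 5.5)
Since $\mathcal{K}_k(\mathcal{F},\bm{c})$ is spanned by $\mathcal{F}^j\bm{c}$ for $0\le j\le k-1$, and the right-hand side is a linear subspace, it suffices to show that each $\mathcal{F}^j\bm{c}$ lies in $\mathcal{K}_{j+1}^\square(B^\top,C_2)\otimes\mathcal{K}_{j+1}^\square(A,C_1)$. Because the block Krylov spaces are nested in the index, $\mathcal{K}_{j+1}^\square\subseteq\mathcal{K}_k^\square$ for $j\le k-1$, and the Kronecker product of subspaces is monotone in each factor. The plan is to work on the matrix side via the vec correspondence of Section~\ref{sec:existing}: $\mathcal{F}^j\bm{c}=\mathrm{vec}(\mathcal{S}^j(C_1C_2^\top))$, where $\mathcal{S}(X):=AX+XB$.

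First we record a translation lemma. For $\bm{v}\in\mathbb{R}^n$ and $\bm{w}\in\mathbb{R}^m$ we have $\bm{w}\otimes\bm{v}=\mathrm{vec}(\bm{v}\bm{w}^\top)$. Hence the subspace $\mathcal{W}\otimes\mathcal{V}$ is exactly the vec-image of
\begin{equation*}
\mathrm{span}\{\bm{v}\bm{w}^\top:\bm{v}\in\mathcal{V},\bm{w}\in\mathcal{W}\}=\{V M W^\top : M \text{ arbitrary}\},
\end{equation*}
where $V$ and $W$ are any matrices whose ranges are $\mathcal{V}$ and $\mathcal{W}$. The equality holds because $VMW^\top=\sum_{p,q}M_{pq}\bm{v}_p\bm{w}_q^\top$ for the columns $\bm{v}_p$ of $V$ and $\bm{w}_q$ of $W$.

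Next, by induction on $j$, we show that $\mathcal{S}^j(C_1C_2^\top)=\sum_{i=0}^{j}\binom{j}{i}A^iC_1C_2^\top B^{j-i}$. Here we use that left multiplication by $A$ and right multiplication by $B$ commute as operators on $\mathbb{R}^{n\times m}$, so that $\mathcal{S}^j=(L_A+R_B)^j$ expands binomially. One could instead avoid the exact coefficients and only show the inductive step: if $X=\sum_{i,l\le j}A^iC_1M_{il}C_2^\top B^l$, then $AX+XB$ has the same form with indices at most $j+1$.

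Finally, we write each term as $A^iC_1\,C_2^\top B^{j-i}=(A^iC_1)\,\bigl((B^\top)^{j-i}C_2\bigr)^\top$. Its first factor has columns in $\mathcal{K}_{j+1}^\square(A,C_1)$ and its second in $\mathcal{K}_{j+1}^\square(B^\top,C_2)$. It is therefore of the form $VMW^\top$ with $V=[C_1,\dots,A^{k-1}C_1]$ and $W=[C_2,\dots,(B^\top)^{k-1}C_2]$, with $M$ a block selector. The translation lemma then gives the claim.

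There is no real obstacle here. The only point needing care is the bookkeeping in the translation lemma: the Kronecker order $\bm{w}\otimes\bm{v}$ matches $\mathrm{vec}(\bm{v}\bm{w}^\top)$, with the $B^\top$-space appearing first. One must also pass correctly from rank-one spans to spans of products $VMW^\top$.
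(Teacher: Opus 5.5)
Your proof is correct and is essentially the paper's argument carried through the vec isomorphism. The paper expands $\mathcal{F}^i\bm{c}$ binomially as $\sum_l\binom{i}{l}\bigl[((B^\top)^{i-l}C_2)\otimes(A^lC_1)\bigr]\mathrm{vec}(I_s)$ and matches it to the general element $\sum_{i,j}\bigl[((B^\top)^jC_2)\otimes(A^iC_1)\bigr]\mathrm{vec}(\hat{Z}_{ij})$ of the Kronecker subspace, which is exactly your $\mathcal{S}^j(C_1C_2^\top)=\sum_i\binom{j}{i}A^iC_1C_2^\top B^{j-i}$ combined with your translation lemma identifying that subspace with $\{\mathrm{vec}(VMW^\top)\}$. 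Because you prove the lemma as an equality, you also make explicit the direction actually needed (every $\mathrm{vec}(VMW^\top)$ lies in the span of the $\bm{w}\otimes\bm{v}$), which the paper leaves as ``obvious''.
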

\begin{proof}
    From the definition~\eqref{v-KS}, an arbitrary vector $\bm{z}\in\mathcal{K}_k(\mathcal{F},\bm{c})$ can be expressed as a linear combination of basis vectors $\bm{c},\mathcal{F}\bm{c},\ldots,\mathcal{F}^{k-1}\bm{c}$, i.e., 
    \begin{equation}
        \bm{z}=\sum^{k-1}_{i=0}\rho_i\mathcal{F}^{i}\bm{c},
    \end{equation}
    where $\rho_i\in\mathbb{R}$. 
    From the structure of $\mathcal{F}=I_m\otimes A + B^\top \otimes I_n$, it follows that
    \begin{align*}
        \mathcal{F}^i\bm{c}&=\left(I_m\otimes A + B^\top \otimes I_n\right)^i\bm{c}\\
        &=\sum^{i}_{l=0}\begin{pmatrix}
            i \\ l
        \end{pmatrix}\left[\left(B^\top\right)^{i-l}\otimes A^{l}\right]\bm{c},
    \end{align*}
    where $\begin{pmatrix}i \\ l\end{pmatrix}$ denotes the binomial coefficient.
    Then, $\bm{z}\in\mathcal{K}_k(\mathcal{F},\bm{c})$ can be expressed as
    \begin{equation*}
        \bm{z}=\sum^{k-1}_{i=0}\left\{\rho_i\sum^{i}_{l=0}\begin{pmatrix}
            i \\ l
        \end{pmatrix}\left[\left(B^\top\right)^{i-l}\otimes A^{l}\right]\bm{c}\right\}.
    \end{equation*}
    Since $\bm{c}=\mathrm{vec}(C_1C_2^\top)=(C_2\otimes C_1)\mathrm{vec}(I_s)$, it follows that
    \begin{equation}
        \bm{z}=\sum^{k-1}_{i=0}\left\{\rho_i\sum^{i}_{l=0}\begin{pmatrix}
            i \\ l
        \end{pmatrix}\left[\left(\left(B^\top\right)^{i-l}C_2\right) \otimes \left(A^{l}C_1\right)\right]\mathrm{vec}(I_s)\right\}.\label{eq:z_in_K}
    \end{equation}
    On the other hand, for any $\tilde{\bm{z}}\in\mathcal{K}_k^\square(B^\top,C_2)\otimes\mathcal{K}_k^\square(A,C_1)$, there exist matrices $\hat{Z}_{ij}\in\mathbb{R}^{s\times s}$ such that
    \begin{equation}
        \tilde{\bm{z}}
        = \sum^{k-1}_{i=0}\sum^{k-1}_{j=0} \left\{\left[(B^\top)^jC_2\right]\otimes(A^iC_1)\right\} \mathrm{vec}(\hat{Z}_{ij}).\label{eq:z_in_KK}
    \end{equation}
    Hence, it is obvious that~\eqref{eq:z_in_K} is of the form~\eqref{eq:z_in_KK}, which completes the proof.
\end{proof}

When $C_1$ and $C_2$ are vectors, i.e., $C_1\in\mathbb{R}^n, C_2\in\mathbb{R}^m$, the above proposition corresponds to a special case of the statement in~\cite[Section 3.1]{kressner2010}.
Proposition~\ref{prop:sub} shows that the Krylov subspace $\mathcal{K}_k(\mathcal{F},\bm{c})$ can be factorized as the (Kronecker) product of two block Krylov subspaces $\mathcal{K}_k^\square(A,C_1)$ and $\mathcal{K}_k^\square(B^\top,C_2)$.
This fact leads to the following result, obtained by applying the inverse vec operator to~\eqref{eq:z_in_K}.
\begin{corollary}\label{cor:z}
    Let $\bm{z}=\mathrm{vec}(Z)\in\mathcal{K}_k(\mathcal{F},\bm{c})$, where $\mathcal{F}$ and $\bm{c}$ are as defined in Proposition~\ref{prop:sub}, and $Z\in\mathbb{R}^{n\times m}$.
    Then,
    \begin{equation}\label{eq:lr-express}
        Z = \begin{bmatrix}C_1, AC_1, \ldots ,A^{k-1}C_1\end{bmatrix}\hat{\mathbf{Z}}\begin{bmatrix}C_2, B^\top C_2, \ldots ,\left(B^\top\right)^{k-1}C_2\end{bmatrix}^\top,
    \end{equation}
     where $\hat{\mathbf{Z}}\in\mathbb{R}^{ks\times ks}$ is a block upper anti-triangular matrix with the $(i,j)$-th block matrix $\hat{Z}_{ij}\in\mathbb{R}^{s\times s}$ defined by
    \begin{equation*}
        \hat{Z}_{ij} = \begin{cases}
            \begin{pmatrix}
                i+j-2 \\ i-1
            \end{pmatrix}\rho_{i+j-2}I_s & \text{if $i+j \le k+1$,} \\
            O_{s,s}       & \text{if $i+j > k+1$.}
        \end{cases} 
    \end{equation*}
\end{corollary}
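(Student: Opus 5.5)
The plan is to start from the explicit expansion \eqref{eq:z_in_K} obtained in the proof of Proposition~\ref{prop:sub} and to apply the inverse vec operator term by term. Since $\bm{z}\in\mathcal{K}_k(\mathcal{F},\bm{c})$, we may write $\bm{z}=\sum_{i=0}^{k-1}\rho_i\mathcal{F}^i\bm{c}$ for some scalars $\rho_i$. The $\rho_i$ appearing in $\hat{\mathbf{Z}}$ are exactly these coefficients. Two ingredients are then needed.

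\textbf{Step 1: un-vectorize each term.} The first ingredient is the standard identity $(Y\otimes W)\mathrm{vec}(M)=\mathrm{vec}(WMY^\top)$. Applied to a single summand of \eqref{eq:z_in_K}, it gives
\begin{equation*}
\left[\left((B^\top)^{i-l}C_2\right)\otimes\left(A^lC_1\right)\right]\mathrm{vec}(I_s)=\mathrm{vec}\!\left(A^lC_1\left((B^\top)^{i-l}C_2\right)^\top\right).
\end{equation*}
By linearity of vec, we obtain
\begin{equation*}
Z=\sum_{i=0}^{k-1}\rho_i\sum_{l=0}^{i}\binom{i}{l}\,(A^lC_1)\left((B^\top)^{i-l}C_2\right)^\top .
\end{equation*}

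\textbf{Step 2: recognize a block bilinear form.} The second ingredient is the observation that, for any $ks\times ks$ matrix $\hat{\mathbf{Z}}$ with $s\times s$ blocks $\hat Z_{pq}$ (block indices $p,q=1,\dots,k$), we have
\begin{equation*}
[C_1,\dots,A^{k-1}C_1]\,\hat{\mathbf{Z}}\,[C_2,\dots,(B^\top)^{k-1}C_2]^\top=\sum_{p,q=1}^{k}(A^{p-1}C_1)\hat Z_{pq}\left((B^\top)^{q-1}C_2\right)^\top .
\end{equation*}
This follows directly from block matrix multiplication. It therefore suffices to match coefficients. Reindex the double sum of Step 1 by $p=l+1$ and $q=i-l+1$. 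Then $i=p+q-2$ and $l=p-1$, and the range $0\le l\le i\le k-1$ becomes exactly $p,q\ge1$ with $p+q\le k+1$. The coefficient of the $(p,q)$ term is $\binom{p+q-2}{p-1}\rho_{p+q-2}$, multiplied by $I_s$. The map $(i,l)\mapsto(p,q)$ is a bijection onto this index set, so each admissible block receives exactly one contribution. All blocks with $p+q>k+1$ receive nothing and are therefore $O_{s,s}$. This yields precisely the stated $\hat Z_{ij}$, and also shows the block upper anti-triangular structure.

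\textbf{Main obstacle.} The only delicate point is the bookkeeping in Step 2. One must check that the change of summation index is a bijection onto $\{(p,q):p+q\le k+1\}$, so that no block collects two contributions and none is missed. One must also check that the orientation $A^{p-1}C_1$ on the left and $(B^\top)^{q-1}C_2$ on the right is consistent with the Kronecker ordering $Y\otimes W$ in the vec identity. Once this is verified, the corollary follows immediately.
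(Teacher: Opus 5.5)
Your proposal is correct and follows the paper's route: the paper obtains the corollary simply by applying the inverse vec operator to \eqref{eq:z_in_K}, and your two steps spell out the bookkeeping the paper leaves implicit. Those steps are the identity $(Y\otimes W)\mathrm{vec}(M)=\mathrm{vec}(WMY^\top)$ followed by the reindexing $p=l+1$, $q=i-l+1$, and the bijection onto $\{(p,q): p,q\ge 1,\ p+q\le k+1\}$ with coefficient $\binom{p+q-2}{p-1}\rho_{p+q-2}I_s$ checks out.
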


The above result shows that all matrices $Z\in\mathbb{R}^{n\times m}$ such that $\mathrm{vec}(Z)\in\mathcal{K}_k(\mathcal{F},\bm{c})$ are of rank at most $ks$, which implies that $Z$ is low-rank when $ks\ll m,n$.
This motivates us to redesign the matrix-oriented Krylov subspace methods for~\eqref{eq:sylvester} based on the low-rank factorized form~\eqref{eq:lr-express}.
Such a low-rank representation avoids storing large dense matrices, which is expected to reduce memory consumption and improve computational efficiency.
To this end, we introduce the following corollary, which provides a more tractable low-rank factorized form suited to the reformulation of the basic operations discussed later.
\begin{corollary}\label{cor:z2}
The matrix $Z$ in Corollary~\ref{cor:z} can be represented as follows:
\begin{equation}
    Z=\mathcal{V}_k\mathbf{Z}\mathcal{W}_k^\top,\label{form:VZW}
    % \red{Z=\mathcal{V}_k\mathcal{Z}\mathcal{W}_k^\top,\label{form:VZW}}
\end{equation}
where the columns of $\mathcal{V}_k\in\mathbb{R}^{n\times ks}$ and $\mathcal{W}_k\in\mathbb{R}^{m\times ks}$ form orthonormal bases of the block Krylov subspaces $\mathcal{K}_k^\square(A,C_1)$ and $\mathcal{K}_k^\square(B^\top,C_2)$, respectively. 
\end{corollary}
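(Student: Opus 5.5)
Starting from Corollary~\ref{cor:z}, write $\mathbf{K}_A:=[C_1,AC_1,\ldots,A^{k-1}C_1]\in\mathbb{R}^{n\times ks}$ and $\mathbf{K}_B:=[C_2,B^\top C_2,\ldots,(B^\top)^{k-1}C_2]\in\mathbb{R}^{m\times ks}$. Then $Z=\mathbf{K}_A\hat{\mathbf{Z}}\mathbf{K}_B^\top$. By definition, $\mathcal{K}_k^\square(A,C_1)=\mathrm{range}(\mathbf{K}_A)$ and $\mathcal{K}_k^\square(B^\top,C_2)=\mathrm{range}(\mathbf{K}_B)$. The plan is to replace each Krylov matrix by an orthonormal basis of its range, and to absorb the change of basis into a small core matrix.

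\textbf{Step 1: change of basis.} Let $\mathcal{V}_k$ have orthonormal columns spanning $\mathrm{range}(\mathbf{K}_A)$, obtained for example from a (block) Gram--Schmidt / Arnoldi process or a QR factorization of $\mathbf{K}_A$. Since each column of $\mathbf{K}_A$ lies in this range, $\mathcal{V}_k\mathcal{V}_k^\top\mathbf{K}_A=\mathbf{K}_A$. Hence $\mathbf{K}_A=\mathcal{V}_k H_A$ with $H_A:=\mathcal{V}_k^\top\mathbf{K}_A$. In the same way, $\mathbf{K}_B=\mathcal{W}_k H_B$ with $H_B:=\mathcal{W}_k^\top\mathbf{K}_B$.

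\textbf{Step 2: define the core.} Substituting gives
\[
Z=\mathcal{V}_k\bigl(H_A\hat{\mathbf{Z}}H_B^\top\bigr)\mathcal{W}_k^\top ,
\]
so we set $\mathbf{Z}:=H_A\hat{\mathbf{Z}}H_B^\top$. Equivalently, $\mathbf{Z}=\mathcal{V}_k^\top Z\mathcal{W}_k$, which makes the core independent of how $Z$ was written.

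\textbf{Main obstacle: dimensions.} The statement asserts $\mathcal{V}_k\in\mathbb{R}^{n\times ks}$ and $\mathcal{W}_k\in\mathbb{R}^{m\times ks}$. This is exact only when $\mathbf{K}_A$ and $\mathbf{K}_B$ have full column rank $ks$, that is, when no block Krylov deflation occurs. In that case $H_A,H_B$ are square and invertible; with a QR factorization they are upper triangular, which is the block-Arnoldi structure used later. If deflation does occur, the orthonormal bases have fewer than $ks$ columns. I would either state the full-rank assumption explicitly, or note that the same argument goes through with the smaller column dimension, and also with $\mathbf{Z}$ padded by zero blocks. Everything else in the argument is routine linear algebra.
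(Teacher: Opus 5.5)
Your proof is correct and is essentially the change-of-basis argument the paper leaves implicit (the corollary is stated without proof): the columns of each block Krylov matrix lie in the span of $\mathcal{V}_k$ (resp.\ $\mathcal{W}_k$), so $\mathbf{K}_A=\mathcal{V}_kH_A$ and $\mathbf{K}_B=\mathcal{W}_kH_B$, and the coefficient matrices are absorbed into the core $\mathbf{Z}=H_A\hat{\mathbf{Z}}H_B^\top=\mathcal{V}_k^\top Z\mathcal{W}_k$. Your remark that the sizes $n\times ks$ and $m\times ks$ presuppose no block-Krylov deflation is right, and the paper assumes this tacitly (e.g.\ in the block Arnoldi relations with $(k+1)s\times ks$ Hessenberg matrices); only the zero-padding aside is loose, because padding $\mathbf{Z}$ with zeros forces you to add extra orthonormal columns to $\mathcal{V}_k,\mathcal{W}_k$, and those columns then span more than the Krylov subspaces.
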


\subsection{Reformulation of basic operations}
We reformulate the basic operations of the matrix-oriented Krylov subspace methods described in Section~\ref{sec:existing} by employing the expression~\eqref{form:VZW}.
Note that, throughout this paper, bold upright notation is used to denote the corresponding small matrix in a low-rank factorized form (e.g., $Z=\mathcal{V}_k\mathbf{Z}\mathcal{W}_k^\top$), in order to distinguish it from the original large matrix.

To utilize the low-rank factorized form~\eqref{form:VZW}, it is required to obtain orthonormal bases of the block Krylov subspaces $\mathcal{K}_k^\square(A,C_1)$ and $\mathcal{K}^\square_k(B^\top,C_2)$, respectively.
Such bases can be computed by the block Arnoldi process~\cite{saad2011} (or the block Lanczos process for the symmetric case~\cite{cullum1974,golub1977}).
Hereafter, $\mathcal{V}_k$ and $\mathcal{W}_k$ are assumed to be computed by the block Arnoldi process or the block Lanczos process.
Therefore, the following relations hold:
\begin{equation}
    A\mathcal{V}_k = \mathcal{V}_{k+1}\mathcal{H}_{k+1,k}, \quad B^\top\mathcal{W}_k = \mathcal{W}_{k+1}\mathcal{G}_{k+1,k},\label{eq:BK}
\end{equation}
where $\mathcal{H}_{k+1,k}\in\mathbb{R}^{(k+1)s\times ks}, \mathcal{G}_{k+1,k}\in\mathbb{R}^{(k+1)s\times ks}$ are block Hessenberg matrices, and $\mathcal{V}_k:=\left[V_1,V_2,\ldots,V_k\right]\in\mathbb{R}^{n\times ks},\mathcal{W}_k:=\left[W_1,W_2,\ldots,W_k\right]\in\mathbb{R}^{m\times ks}$ are column-orthonormal matrices.
In what follows, we assume $m,n$ to be much larger than $k,s$, so that $ks\ll m,n$.

Let $X,Y\in\mathbb{R}^{n\times m}$ satisfy $\mathrm{vec}(X),\mathrm{vec}(Y)\in\mathcal{K}_k(\mathcal{F},\bm{c})$.
In this case, from Corollary~\ref{cor:z2}, there exist $\mathbf{X},\mathbf{Y}\in\mathbb{R}^{ks\times ks}$ such that
\begin{equation*}
    X=\mathcal{V}_k\mathbf{X}\mathcal{W}_k^\top,\quad Y=\mathcal{V}_k\mathbf{Y}\mathcal{W}_k^\top.
    % \red{X=\mathcal{V}_k\mathcal{X}\mathcal{W}_k^\top,\quad Y=\mathcal{V}_k\mathcal{Y}\mathcal{W}_k^\top.}
\end{equation*}
Using the above expression, each operation can be reformulated as follows.
\begin{itemize}
    \item Addition: Matrix addition $X+Y$ can be represented as
    \begin{equation}
        X+Y = \mathcal{V}_k(\mathbf{X} + \mathbf{Y})\mathcal{W}_k^\top,\label{oper:addition}
        % \red{X+Y = \mathcal{V}_k(\mathcal{X} + \mathcal{Y})\mathcal{W}_k^\top,\label{oper:addition}}
    \end{equation}
    which indicates that the addition of $n\times m$ matrices can be reduced to the addition of $ks\times ks$ matrices.
    Then, the computational cost decreases from $\mathcal{O}(mn)$ to $\mathcal{O}(k^2s^2)$.
    \item Inner product: The inner product of $X$ and $Y$ can be rewritten as
    \begin{equation}
        \langle X,Y\rangle=\mathrm{tr}(X^\top Y)=\mathrm{tr}(\mathcal{W}_k\mathbf{X}^\top\mathcal{V}_k^\top \mathcal{V}_k\mathbf{Y}\mathcal{W}_k^\top)=\langle\mathbf{X},\mathbf{Y}\rangle.\label{oper:inner_product}
        % \red{(X,Y)=\mathrm{tr}(X^\top Y)=\mathrm{tr}(\mathcal{W}_k\mathcal{X}^\top\mathcal{V}_k^\top \mathcal{V}_k\mathcal{Y}\mathcal{W}_k^\top)=(\mathcal{X},\mathcal{Y}).\label{oper:inner_product}}
    \end{equation}
    This implies that the inner product of $n\times m$ matrices can be reduced to the inner product of $ks\times ks$ matrices, thereby reducing the computational cost from $\mathcal{O}(mn)$ to $\mathcal{O}(k^2s^2)$.
    \item Sylvester operator: From the relation~\eqref{eq:BK}, it follows that
    \begin{align}
        AX+XB &= A\mathcal{V}_k\mathbf{X}\mathcal{W}_k^\top + \mathcal{V}_k\mathbf{X}\mathcal{W}_k^\top B \nonumber \\
        & = \mathcal{V}_{k+1}\left(\left[\mathcal{H}_{k+1,k}\mathbf{X}, O_{ks,s}\right] + \begin{bmatrix}\mathbf{X}\mathcal{G}_{k+1,k}^\top \\ O_{s,ks}\end{bmatrix}\right)\mathcal{W}_{k+1}^\top.\label{oper:sylvester-operator}
        % AX+XB &= A\mathcal{V}_k\mathcal{X}\mathcal{W}_k^\top + \mathcal{V}_k\mathcal{X}\mathcal{W}_k^\top B \nonumber \\
        % & = \mathcal{V}_{k+1}\left(\left[\mathcal{H}_{k+1,k}\mathcal{X}, O_{ks,s}\right] + \begin{bmatrix}\mathcal{X}\mathcal{G}_{k+1,k}^\top \\ O_{s,ks}\end{bmatrix}\right)\mathcal{W}_{k+1}^\top.\label{oper:sylvester-operator}
    \end{align}
    This allows the multiplication between a large sparse matrix and a large dense matrix to be reduced to the multiplication between small dense matrices, which reduces the computational cost from $\mathcal{O}(\mathrm{nnz}(A)\cdot m + \mathrm{nnz}(B)\cdot n + mn)$ to $\mathcal{O}(k^3s^3)$.
    Although the Sylvester operator in the low-rank factorized form requires additional computations for $\mathcal{V}_{k+1}$ and $\mathcal{W}_{k+1}$, 
    these matrices can be obtained by performing the $k$-th step of the block Arnoldi process (or block Lanczos process) for $\mathcal{K}_k^\square(A,C_1)$ and $\mathcal{K}_k^\square(B^\top,C_2)$, respectively.
    The computational cost of the $k$-th step of the block Arnoldi process is $\mathcal{O}\left((\mathrm{nnz}(A)+\mathrm{nnz}(B))s + k(m+n)s^2\right)$, while that of the block Lanczos process is $\mathcal{O}\left((\mathrm{nnz}(A)+\mathrm{nnz}(B))s + (m+n)s^2\right)$.
\end{itemize}

\section{Factorized Krylov subspace methods for the Sylvester equation}
\label{sec:main}
In this section, we redesign the matrix-oriented Krylov subspace methods by using the low-rank factorized form discussed in the previous sections.
This paper describes only the CG and BiCGSTAB methods; however, our approach can be applied to other Krylov subspace methods, such as the CR~\cite{stiefel1955}, CGS~\cite{sonneveld1989}, and GPBiCG~\cite{zhang1997} methods.

\subsection{Symmetric case}
We first consider reconstructing the matrix-oriented CG method for the Sylvester equation~\eqref{eq:sylvester}.
In this subsection, we assume that $A$ and $B$ are symmetric positive definite.
In view of the correspondence with the equivalent linear system~\eqref{eq:ls}, the following relations hold for the matrices in Algorithm~\ref{alg:CG}:
\begin{align*}
    \mathrm{vec}(X_k-X_0)\in\mathcal{K}_{k}(\mathcal{F},\mathrm{vec}(R_0)), \quad \mathrm{vec}(R_k)\in\mathcal{K}_{k+1}(\mathcal{F},\mathrm{vec}(R_0)), \\
    \mathrm{vec}(P_k)\in\mathcal{K}_{k+1}(\mathcal{F},\mathrm{vec}(R_0)), \quad \mathrm{vec}(Q_k)\in\mathcal{K}_{k+2}(\mathcal{F},\mathrm{vec}(R_0)).
\end{align*}
Here, to represent the approximate solutions in the low-rank factorized form~\eqref{form:VZW}, we set $X_0 = O_{n,m}$, which implies that 
\begin{align*}
    \mathrm{vec}(X_k)\in\mathcal{K}_{k}(\mathcal{F},\bm{c}), \quad \mathrm{vec}(R_k)\in\mathcal{K}_{k+1}(\mathcal{F},\bm{c}), \\
    \mathrm{vec}(P_k)\in\mathcal{K}_{k+1}(\mathcal{F},\bm{c}), \quad \mathrm{vec}(Q_k)\in\mathcal{K}_{k+2}(\mathcal{F},\bm{c}).
\end{align*}
Consequently, each matrix can be represented as the following low-rank factorized form:
\begin{align*}
    X_k=\mathcal{V}_{k}\mathbf{X}_k\mathcal{W}_{k}^\top, \quad 
    R_k = \mathcal{V}_{k+1}\mathbf{R}_k\mathcal{W}_{k+1}^\top, \\
    P_k = \mathcal{V}_{k+1}\mathbf{P}_k\mathcal{W}_{k+1}^\top, \quad 
    Q_k = \mathcal{V}_{k+2}\mathbf{Q}_k\mathcal{W}_{k+2}^\top, 
\end{align*}
where
\begin{align*}
    \mathbf{X}_k\in\mathbb{R}^{ks \times ks}, \quad
    \mathbf{R}_k\in\mathbb{R}^{(k+1)s \times (k+1)s}, \\
    \mathbf{P}_k\in\mathbb{R}^{(k+1)s \times (k+1)s}, \quad
    \mathbf{Q}_k\in\mathbb{R}^{(k+2)s \times (k+2)s}.
\end{align*}
Since $A$ and $B$ are symmetric, the block Lanczos process can be used to compute $\mathcal{V}_k$ and $\mathcal{W}_k$.
Using~\eqref{oper:addition}, the lines 7 and 8 of Algorithm~\ref{alg:CG} can be written as
\begin{align*}
    X_{k+1} &= X_k + \alpha_k P_k = \mathcal{V}_{k+1}\left(\begin{bmatrix}\mathbf{X}_k & O_{ks,s} \\ O_{s,ks} & O_{s,s}\end{bmatrix} + \alpha_k\mathbf{P}_k\right)\mathcal{W}_{k+1}^\top, \\
    R_{k+1} &= R_k - \alpha_k Q_k = \mathcal{V}_{k+2}\left(\begin{bmatrix}\mathbf{R}_k & O_{(k+1)s,s} \\ O_{s,(k+1)s} & O_{s,s}\end{bmatrix} - \alpha_k\mathbf{Q}_k\right)\mathcal{W}_{k+2}^\top.
    % X_{k+1} &= X_k + \alpha_k P_k = \mathcal{V}_{k+1}\left(\begin{bmatrix}\mathcal{X}_k & O_{ks,s} \\ O_{s,ks} & O_{s,s}\end{bmatrix} + \alpha_k\mathcal{P}_k\right)\mathcal{W}_{k+1}^\top, \\
    % R_{k+1} &= R_k - \alpha_k Q_k = \mathcal{V}_{k+2}\left(\begin{bmatrix}\mathcal{R}_k & O_{(k+1)s,s} \\ O_{s,(k+1)s} & O_{s,s}\end{bmatrix} - \alpha_k\mathcal{Q}_k\right)\mathcal{W}_{k+2}^\top.
\end{align*}

Therefore, we can update the small matrices $\mathbf{X}_k$ and $\mathbf{R}_k$ by the following expressions
\begin{align*}
    \mathbf{X}_{k+1}&:=\begin{bmatrix}\mathbf{X}_k & O_{ks,s} \\ O_{s,ks} & O_{s,s}\end{bmatrix} + \alpha_k\mathbf{P}_k, \\ % \in\mathbb{R}^{(k+1)s\times (k+1)s}
    \mathbf{R}_{k+1}&:=\begin{bmatrix}\mathbf{R}_k & O_{(k+1)s,s} \\ O_{s,(k+1)s} & O_{s,s}\end{bmatrix} - \alpha_k\mathbf{Q}_k,  % \in\mathbb{R}^{(k+2)s\times (k+2)s}
    % \mathcal{X}_{k+1}&:=\begin{bmatrix}\mathcal{X}_k & O_{ks,s} \\ O_{s,ks} & O_{s,s}\end{bmatrix} + \alpha_k\mathcal{P}_k, \\ % \in\mathbb{R}^{(k+1)s\times (k+1)s}
    % \mathcal{R}_{k+1}&:=\begin{bmatrix}\mathcal{R}_k & O_{(k+1)s,s} \\ O_{s,(k+1)s} & O_{s,s}\end{bmatrix} - \alpha_k\mathcal{Q}_k,  % \in\mathbb{R}^{(k+2)s\times (k+2)s}
\end{align*}
instead of the $n\times m$ dense matrices $X_k$ and $R_k$.
Similarly, the line 14 of Algorithm~\ref{alg:CG} can be written as
\begin{equation*}
    P_{k+1} = R_{k+1} + \beta_kP_k = \mathcal{V}_{k+2}\left( \mathbf{R}_{k+1} + \begin{bmatrix}\beta_k\mathbf{P}_k & O_{(k+1)s,s} \\ O_{s,(k+1)s} & O_{s,s}\end{bmatrix} \right)\mathcal{W}_{k+2}^\top.
    % P_{k+1} = R_{k+1} + \beta_kP_k = \mathcal{V}_{k+2}\left( \mathcal{R}_{k+1} + \begin{bmatrix}\beta_k\mathcal{P}_k & O_{(k+1)s,s} \\ O_{s,(k+1)s} & O_{s,s}\end{bmatrix} \right)\mathcal{W}_{k+2}^\top.
\end{equation*}
Then we obtain the following update formula:
\begin{equation*}
\mathbf{P}_{k+1}:=\mathbf{R}_{k+1} + \begin{bmatrix}\beta_k\mathbf{P}_k & O_{(k+1)s,s} \\ O_{s,(k+1)s} & O_{s,s}\end{bmatrix}.  % \in\mathbb{R}^{(k+2)s\times (k+2)s}}
% \red{\mathcal{P}_{k+1}:=\mathcal{R}_{k+1} + \begin{bmatrix}\beta_k\mathcal{P}_k & O_{(k+1)s,s} \\ O_{s,(k+1)s} & O_{s,s}\end{bmatrix}.}  % \in\mathbb{R}^{(k+2)s\times (k+2)s}
\end{equation*}

With respect to the inner products in the lines 6 and 12 of Algorithm~\ref{alg:CG}, the expression~\eqref{oper:inner_product} gives
\begin{equation*}
    \alpha_k = \rho_k/\left\langle \mathbf{P}_k, \mathbf{Q}_k(1:(k+1)s,1:(k+1)s) \right\rangle, \quad \rho_{k+1} = \langle \mathbf{R}_{k+1}, \mathbf{R}_{k+1} \rangle.
    % \red{\rho_k = \langle \mathcal{R}_k, \mathcal{R}_k \rangle, \quad \alpha_k = \rho_k/\left\langle \mathcal{R}_k, \mathcal{Q}_k(1:(k+1)s,1:(k+1)s) \right\rangle.}
\end{equation*}

The calculation of the Sylvester operator in the line 5 of Algorithm~\ref{alg:CG} becomes
\begin{equation*}
    Q_{k} = AP_{k} + P_{k}B = \mathcal{V}_{k+2}\mathbf{Q}_{k}\mathcal{W}_{k+2}^\top,
\end{equation*}
where
\begin{equation*}
    \mathbf{Q}_{k}:=\left[\mathcal{H}_{k+2,k+1}\mathbf{P}_{k}, O_{(k+2)s,s}\right] + \begin{bmatrix}\mathbf{P}_{k}\mathcal{G}_{k+2,k+1}^\top \\ O_{s,(k+2)s}\end{bmatrix}  % \in\mathbb{R}^{(k+2)s\times (k+2)s}
\end{equation*}
by using~\eqref{oper:sylvester-operator}.

The convergence criterion is that the relative residual norm in Algorithm~\ref{alg:CG} is less than the threshold $\varepsilon_\mathrm{tol}$, that is $\|R_{k+1}\|/\|R_0\|<\varepsilon_\mathrm{tol}$, which can be rewritten as
\begin{equation*}
    \|R_{k+1}\|/\|R_0\| = \|\mathbf{R}_{k+1}\|/\|\mathbf{R}_0\|<\varepsilon_\mathrm{tol}.
\end{equation*}

Consolidating the above discussion, the matrix-oriented CG method for~\eqref{eq:sylvester} can be reconstructed with low-rank format. 
We call the reconstructed algorithm the {\it factorized CG method} and present it in Algorithm~\ref{alg:f-CG}.

\begin{algorithm}
\caption{Factorized CG method for solving~\eqref{eq:sylvester}}
\label{alg:f-CG}
\begin{algorithmic}[1]
\REQUIRE $A\in\mathbb{R}^{n\times n},B\in\mathbb{R}^{m\times m},C_1\in\mathbb{R}^{n\times s}, C_2\in\mathbb{R}^{m\times s},\varepsilon_\mathrm{tol}$
\ENSURE $\mathcal{V}_{k+1}\in\mathbb{R}^{n\times (k+1)s},\mathbf{X}_{k+1}\in\mathbb{R}^{(k+1)s\times (k+1)s},\mathcal{W}_{k+1}\in\mathbb{R}^{m\times (k+1)s}$ such that $\mathcal{V}_{k+1}\mathbf{X}_{k+1}\mathcal{W}_{k+1}^\top$ is an approximate solution to~\eqref{eq:sylvester}
% \STATE{Set initial guess $\mathbf{X}_0=0$}
\STATE{Compute QR factorizations: $C_1=V_1 R_\mathrm{L}, C_2=W_1R_\mathrm{R}$}
\STATE{$\mathbf{X}_0=\left[~\right]$, $\mathbf{R}_0=R_\mathrm{L} R_\mathrm{R}^\top$, $\mathbf{P}_0=\mathbf{R}_0$, $\rho_0=\langle \mathbf{R}_0, \mathbf{R}_0\rangle$}
\FOR{$k=0,1,2,\ldots$}
\STATE{Compute the $(k+1)$-th step of the block Lanczos process for $\mathcal{K}_{k+1}^\square(A,C_1)$ and obtain $\mathcal{V}_{k+2}$ and $\mathcal{H}_{k+2,k+1}$}
\STATE{Compute the $(k+1)$-th step of the block Lanczos process for $\mathcal{K}_{k+1}^\square(B^\top,C_2)$ and obtain $\mathcal{W}_{k+2}$ and $\mathcal{G}_{k+2,k+1}$}
\STATE{$\mathbf{Q}_k = \left[\mathcal{H}_{k+2,k+1}\mathbf{P}_{k}, O_{(k+2)s,s}\right] + \begin{bmatrix}\mathbf{P}_k\mathcal{G}_{k+2,k+1}^\top \\ O_{s,(k+2)s}\end{bmatrix}$}
\STATE{$\alpha_k=\rho_k/\left\langle \mathbf{P}_k, \mathbf{Q}_k(1:(k+1)s,1:(k+1)s) \right\rangle$}
\STATE{$\mathbf{X}_{k+1} = \begin{bmatrix}\mathbf{X}_k & O_{ks,s} \\ O_{s,ks} & O_{s,s}\end{bmatrix} + \alpha_k\mathbf{P}_k$}
\STATE{$\mathbf{R}_{k+1} = \begin{bmatrix}\mathbf{R}_k & O_{(k+1)s,s} \\ O_{s,(k+1)s} & O_{s,s}\end{bmatrix} - \alpha_k\mathbf{Q}_k$}
\IF{$\|\mathbf{R}_{k+1}\|\le \|\mathbf{R}_0\|\cdot \varepsilon_\mathrm{tol}$}
\RETURN $\mathbf{X}_{k+1}$, $\mathcal{V}_{k+1}$, and $\mathcal{W}_{k+1}$
\ENDIF
\STATE{$\rho_{k+1}=\langle \mathbf{R}_{k+1}, \mathbf{R}_{k+1}\rangle$}
\STATE{$\beta_k=\rho_{k+1}/\rho_k$}
\STATE{$\mathbf{P}_{k+1}=\mathbf{R}_{k+1} +  \begin{bmatrix}\beta_k\mathbf{P}_k & O_{(k+1)s,s} \\ O_{s,(k+1)s} & O_{s,s}\end{bmatrix}$}
\ENDFOR
\end{algorithmic}
\end{algorithm}

\subsection{Nonsymmetric case}
This subsection is devoted to reconstructing the matrix-oriented BiCGSTAB method for~\eqref{eq:sylvester} with nonsymmetric $A$ and $B$.
As with the CG method in the previous subsection, let initial guess $X_0$ be a zero matrix $O_{n, m}$.
Then, it follows that
\begin{align*}
    \mathrm{vec}(X_k)\in\mathcal{K}_{2k}(\mathcal{F},\bm{c}), \quad \mathrm{vec}(R_k)\in\mathcal{K}_{2k+1}(\mathcal{F},\bm{c}), \\
    \mathrm{vec}(P_k)\in\mathcal{K}_{2k+1}(\mathcal{F},\bm{c}), \quad \mathrm{vec}(Q_k)\in\mathcal{K}_{2k+2}(\mathcal{F},\bm{c}), \\
    \mathrm{vec}(S_k)\in\mathcal{K}_{2k+2}(\mathcal{F},\bm{c}), \quad \mathrm{vec}(T_k)\in\mathcal{K}_{2k+3}(\mathcal{F},\bm{c}),
\end{align*}
for the matrices in Algorithm~\ref{alg:BiCGSTAB}.
From the above and using the low-rank factorized form~\eqref{form:VZW}, each matrix can be represented as follows:
\begin{align*}
    X_k=\mathcal{V}_{2k}\mathbf{X}_k\mathcal{W}_{2k}^\top, \quad 
    R_k = \mathcal{V}_{2k+1}\mathbf{R}_k\mathcal{W}_{2k+1}^\top, \\
    P_k = \mathcal{V}_{2k+1}\mathbf{P}_k\mathcal{W}_{2k+1}^\top, \quad 
    Q_k = \mathcal{V}_{2k+2}\mathbf{Q}_k\mathcal{W}_{2k+2}^\top, \\
    S_k = \mathcal{V}_{2k+2}\mathbf{S}_k\mathcal{W}_{2k+2}^\top, \quad
    T_k = \mathcal{V}_{2k+3}\mathbf{T}_k\mathcal{W}_{2k+3}^\top,
\end{align*}
where
\begin{align*}
    \mathbf{X}_k\in\mathbb{R}^{2ks \times 2ks}, \quad
    \mathbf{R}_k\in\mathbb{R}^{(2k+1)s \times (2k+1)s}, \\
    \mathbf{P}_k\in\mathbb{R}^{(2k+1)s \times (2k+1)s}, \quad
    \mathbf{Q}_k\in\mathbb{R}^{(2k+2)s \times (2k+2)s}, \\
    \mathbf{S}_k\in\mathbb{R}^{(2k+2)s \times (2k+2)s}, \quad
    \mathbf{T}_k\in\mathbb{R}^{(2k+3)s \times (2k+3)s}.
\end{align*}
We use the block Arnoldi process to compute $\mathcal{V}_k$ and $\mathcal{W}_k$.

Using~\eqref{oper:addition}, the lines 8, 11, 12, and 18 of Algorithm~\ref{alg:BiCGSTAB} can be written as
\begin{align*}
    S_{k} &= R_k - \alpha_k Q_k = \mathcal{V}_{2k+2}\mathbf{S}_{k}\mathcal{W}_{2k+2}^\top, \\
    X_{k+1} &= X_k + \alpha_k P_k + \omega_k S_k = \mathcal{V}_{2k+2}\mathbf{X}_{k+1}\mathcal{W}_{2k+2}^\top, \\
    R_{k+1} &= S_k - \omega_k T_k = \mathcal{V}_{2k+3}\mathbf{R}_{k+1}\mathcal{W}_{2k+3}^\top, \\
    P_{k+1} &= R_{k+1} + \beta_k (P_k - \omega_k Q_k) = \mathcal{V}_{2k+3}\mathbf{P}_{k+1}\mathcal{W}_{2k+3}^\top,
\end{align*}
which implies the following recurrence formulas:
\begin{align*}
    \mathbf{S}_k    &:=\begin{bmatrix}\mathbf{R}_k & O_{(2k+1)s,s} \\ O_{s,(2k+1)s} & O_{s,s}\end{bmatrix} - \alpha_k\mathbf{Q}_{k}, \\ % \in\mathbb{R}^{(2k+2)s\times (2k+2)s}
    \mathbf{X}_{k+1}&:=\begin{bmatrix}\mathbf{X}_k & O_{2ks,2s} \\ O_{2s,2ks} & O_{2s,2s}\end{bmatrix} + \begin{bmatrix}\alpha_k\mathbf{P}_k & O_{(2k+1)s,s} \\ O_{s,(2k+1)s} & O_{s,s}\end{bmatrix} + \omega_k\mathbf{S}_k, \\ % \in\mathbb{R}^{(2k+2)s\times (2k+2)s}
    \mathbf{R}_{k+1}&:=\begin{bmatrix}\mathbf{S}_k & O_{(2k+2)s,s} \\ O_{s,(2k+2)s} & O_{s,s}\end{bmatrix} - \omega_k\mathbf{T}_k, \\ % \in\mathbb{R}^{(2k+3)s\times (2k+3)s}
    \mathbf{P}_{k+1}&:=\mathbf{R}_{k+1} + \beta_k\left(\begin{bmatrix}\mathbf{P}_k & O_{(2k+1)s,2s} \\ O_{2s,(2k+1)s} & O_{2s,2s}\end{bmatrix} - \omega_k \begin{bmatrix}\mathbf{Q}_k & O_{(2k+2)s,s} \\ O_{s,(2k+2)s} & O_{s,s}\end{bmatrix}\right). % \in\mathbb{R}^{(2k+3)s\times (2k+3)s}
\end{align*}

From~\eqref{oper:inner_product}, the inner products in the lines 7, 10, 16 of Algorithm~\ref{alg:BiCGSTAB} become
\begin{align*}
    \alpha_k &= \rho_k/\langle \tilde{\mathbf{R}}_0 , \mathbf{Q}_k(1:s,1:s) \rangle, \\
    \omega_k &= \langle \mathbf{T}_k(1:(2k+2)s,1:(2k+2)s) , \mathbf{S}_k \rangle/\langle \mathbf{T}_k , \mathbf{T}_k \rangle, \\
    \rho_{k+1} &= \langle \tilde{\mathbf{R}}_0 , \mathbf{R}_{k+1}(1:s,1:s) \rangle. 
\end{align*}

The calculation of the Sylvester operator in the lines 6 and 9 of Algorithm~\ref{alg:BiCGSTAB} becomes
\begin{align*}
    Q_{k} &= AP_{k} + P_{k}B = \mathcal{V}_{2k+2}\mathbf{Q}_{k}\mathcal{W}_{2k+2}^\top,\\
    T_{k} &= AS_{k} + S_{k}B = \mathcal{V}_{2k+3}\mathbf{T}_{k}\mathcal{W}_{2k+3}^\top,
\end{align*}
where
\begin{align*}
    \mathbf{Q}_{k}&:=\left[\mathcal{H}_{2k+2,2k+1}\mathbf{P}_k, O_{(2k+2)s,s}\right] + \begin{bmatrix}\mathbf{P}_k\mathcal{G}_{2k+2,2k+1}^\top \\ O_{s,(2k+2)s}\end{bmatrix}, \\ % \in\mathbb{R}^{(2k+2)s\times (2k+2)s}
    \mathbf{T}_{k}&:=\left[\mathcal{H}_{2k+3,2k+2}\mathbf{S}_k, O_{(2k+3)s,s}\right] + \begin{bmatrix}\mathbf{S}_k\mathcal{G}_{2k+3,2k+2}^\top \\ O_{s,(2k+3)s}\end{bmatrix},   % \in\mathbb{R}^{(2k+3)s\times (2k+3)s}
\end{align*}
by using~\eqref{oper:sylvester-operator}.

From the discussion, we have the {\it factorized BiCGSTAB method} presented in Algorithm~\ref{alg:f-BiCGSTAB}.
\begin{algorithm}
\caption{Factorized BiCGSTAB method for solving~\eqref{eq:sylvester}}
\label{alg:f-BiCGSTAB}
\begin{algorithmic}[1]
\REQUIRE $A\in\mathbb{R}^{n\times n},B\in\mathbb{R}^{m\times m},C_1\in\mathbb{R}^{n\times s}, C_2\in\mathbb{R}^{m\times s},\varepsilon_\mathrm{tol}$
\ENSURE $\mathcal{V}_{2k+2}\in\mathbb{R}^{n\times (2k+2)s},\mathbf{X}_{k+1}\in\mathbb{R}^{(2k+2)s\times (2k+2)s},\mathcal{W}_{2k+2}\in\mathbb{R}^{m\times (2k+2)s}$ such that $\mathcal{V}_{2k+2}\mathbf{X}_{k+1}\mathcal{W}_{2k+2}^\top$ is an approximate solution to~\eqref{eq:sylvester}
\STATE{$\mathbf{X}_0=\left[~\right]$}
\STATE{Compute QR factorizations: $C_1=V_1 R_\mathrm{L}, C_2=W_1R_\mathrm{R}$}
\STATE{$\mathbf{R}_0=R_\mathrm{L} R_\mathrm{R}^\top$, $\mathbf{P}_0=\mathbf{R}_0$, $\tilde{\mathbf{R}}_0=\mathbf{R}_0$, $\rho_0=\langle \tilde{\mathbf{R}}_0, \mathbf{R}_0\rangle$}
\FOR{$k=0,1,2,\ldots$}
\STATE{Compute the $(2k+1)$-th step of the block Arnoldi process for $\mathcal{K}_{2k+1}^\square(A,C_1)$ and obtain $\mathcal{V}_{2k+2}$ and $\mathcal{H}_{2k+2,2k+1}$}
\STATE{Compute the $(2k+1)$-th step of the block Arnoldi process for $\mathcal{K}_{2k+1}^\square(B^\top,C_2)$ and obtain $\mathcal{W}_{2k+2}$ and $\mathcal{G}_{2k+2,2k+1}$}
\STATE{$\mathbf{Q}_k = \left[\mathcal{H}_{2k+2,2k+1}\mathbf{P}_k, O_{(2k+2)s,s}\right] + \begin{bmatrix}\mathbf{P}_k\mathcal{G}_{2k+2,2k+1}^\top \\ O_{s,(2k+2)s}\end{bmatrix}$}
\STATE{$\alpha_k=\rho_k/\left\langle \tilde{\mathbf{R}}_0, \mathbf{Q}_k(1:s,1:s) \right\rangle$}
\STATE{$\mathbf{S}_k:=\begin{bmatrix}\mathbf{R}_k & O_{(2k+1)s,s} \\ O_{s,(2k+1)s} & O_{s,s}\end{bmatrix} - \alpha_k\mathbf{Q}_k$}
\STATE{Compute the $(2k+2)$-th step of the block Arnoldi process for $\mathcal{K}_{2k+2}^\square(A,C_1)$ and obtain $\mathcal{V}_{2k+3}$ and $\mathcal{H}_{2k+3,2k+2}$}
\STATE{Compute the $(2k+2)$-th step of the block Arnoldi process for $\mathcal{K}_{2k+2}^\square(B^\top,C_2)$ and obtain $\mathcal{W}_{2k+3}$ and $\mathcal{G}_{2k+3,2k+2}$}
\STATE{$\mathbf{T}_k = \left[\mathcal{H}_{2k+3,2k+2}\mathbf{S}_k, O_{(2k+3)s,s}\right] + \begin{bmatrix}\mathbf{S}_k\mathcal{G}_{2k+3,2k+2}^\top \\ O_{s, (2k+3)s}\end{bmatrix}$}
\STATE{$\omega_k = \langle \mathbf{T}_k(1:(2k+2)s,1:(2k+2)s) , \mathbf{S}_k \rangle/\langle \mathbf{T}_k , \mathbf{T}_k \rangle$}
\STATE{$\mathbf{X}_{k+1} = \begin{bmatrix}\mathbf{X}_k & O_{2ks,2s} \\ O_{2s,2ks} & O_{2s,2s}\end{bmatrix} + \begin{bmatrix}\alpha_k\mathbf{P}_k & O_{(2k+1)s,s} \\ O_{s,(2k+1)s} & O_{s,s}\end{bmatrix} + \omega_k\mathbf{S}_k$}
\STATE{$\mathbf{R}_{k+1} = \begin{bmatrix}\mathbf{S}_k & O_{(2k+2)s,s} \\ O_{s,(2k+2)s} & O_{s,s}\end{bmatrix} - \omega_k\mathbf{T}_k$}
\IF{$\|\mathbf{R}_{k+1}\|\le \|\mathbf{R}_0\|\cdot \varepsilon_\mathrm{tol}$}
\RETURN $\mathbf{X}_{k+1}$, $\mathcal{V}_{2k+2}$, and $\mathcal{W}_{2k+2}$
\ENDIF
\STATE{$\rho_{k+1} = \langle \tilde{\mathbf{R}}_0 , \mathbf{R}_{k+1}(1:s,1:s) \rangle$}
\STATE{$\beta_k=(\alpha_k/\omega_k)\cdot(\rho_{k+1}/\rho_k)$}
\STATE{$\mathbf{P}_{k+1}=\mathbf{R}_{k+1} + \beta_k\left(\begin{bmatrix}\mathbf{P}_k & O_{(2k+1)s,2s} \\ O_{2s,(2k+1)s} & O_{2s,2s}\end{bmatrix} - \omega_k \begin{bmatrix}\mathbf{Q}_k & O_{(2k+2)s,s} \\ O_{s,(2k+2)s} & O_{s,s}\end{bmatrix}\right)$}
\ENDFOR
\end{algorithmic}
\end{algorithm}

%% ================================================ %%
\subsection{The case of the Lyapunov equation}\label{subsec:Lyap}
When $B=A^\top$ and $C_2 = C_1$, the Sylvester equation~\eqref{eq:sylvester} becomes 
\begin{equation}
    AX+XA^\top  = C_1 C_1^\top,\label{eq:lyap}
\end{equation}
which is called the Lyapunov equation.
For this case, with the aid of the symmetry of~\eqref{eq:lyap}, the solution can be expressed as $X_k=\mathcal{V}_k\mathbf{X}_k\mathcal{V}_k^\top$ instead of~\eqref{form:VZW}.
This allows us to reduce the computation of the orthogonal basis by half.
Additionally, the number of matrix products appearing in the Sylvester operator (line 6 in Algorithm~\ref{alg:f-CG}) is reduced from two to one.
These modifications improve the computational efficiency of the method.
%% ================================================ %%

\section{Numerical experiments}
\label{sec:experiments}
In this section, we demonstrate the performance of the proposed methods through numerical experiments.
The benchmark problems are collected from earlier studies cited below.
All experiments were performed on an Intel Core i7-12700 CPU (2.10 GHz) with 32.0 GB RAM, running Windows 11 Pro, using MATLAB R2024a.

\subsection{Convergence behavior}
From the discussion in Section~\ref{sec:main}, the proposed methods are mathematically equivalent to the matrix-oriented Krylov subspace methods. 
Therefore, similar convergence behaviors can be expected.
We compare the factorized CG and BiCGSTAB methods with the matrix-oriented CG and BiCGSTAB methods on symmetric and nonsymmetric problems, respectively.
In this subsection, the threshold is set to $\varepsilon_\mathrm{tol}=10^{-8}$.

\begin{example}\label{ex1}
    We first consider solving the Lyapunov equation~\eqref{eq:lyap} with symmetric positive definite $A$ by the matrix-oriented CG method and the factorized CG method.
    The test matrix $A$ is taken from~\cite[Section 4.1]{kressner2021} with matrix size $n=10{,}000$, obtained by finite-difference discretization of the two-dimensional Laplacian operator, and $C_1\in\mathbb{R}^{n\times 3}$ is chosen randomly.
    It should be noted that in~\cite[Section 4.1]{kressner2021}, $n$ represents the number of grid points in each direction, while in this paper $n$ denotes the matrix size, i.e., the number of grid points is $\sqrt{n}=100$.

    Figure~\ref{fig:ex1_lyap} shows the convergence histories of the relative residual norms for the matrix-oriented CG and factorized CG methods.
    The result confirms that the two algorithms exhibit the same numerical behavior, which is consistent with their mathematical equivalence.
    \begin{figure}[htbp]
        \centering
        \includegraphics[width=0.7\linewidth]{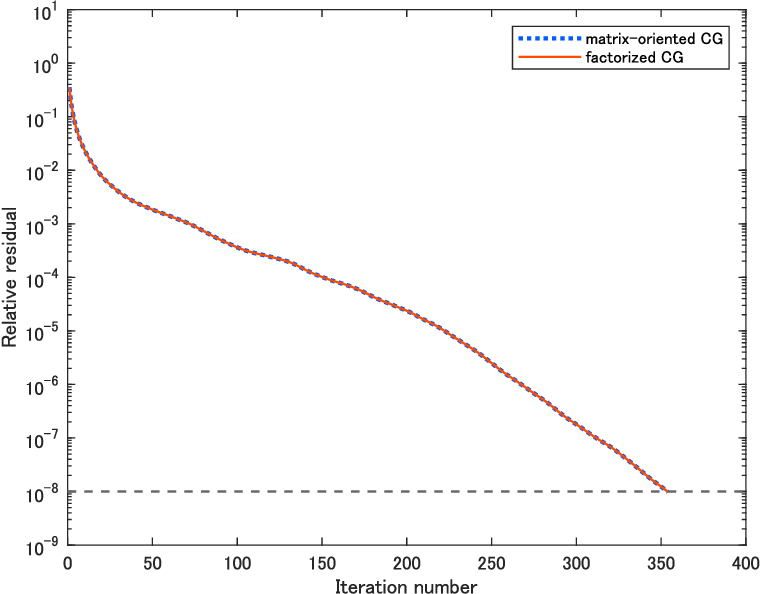}
        \caption{Convergence histories of the matrix-oriented CG method and the factorized CG method for Example~\ref{ex1}.}
        \label{fig:ex1_lyap}
    \end{figure}
\end{example}

\begin{example}\label{ex2}
    We consider solving the Sylvester equation~\eqref{eq:sylvester} with nonsymmetric $A$ and $B$ by the matrix-oriented BiCGSTAB method and the factorized BiCGSTAB method.
    The test matrices $A$ and $B$ are taken from~\cite[Section 4.2]{kressner2021} with the matrix size set to $m=n=8{,}000~(=20^3)$.
    The matrices $C_1,C_2\in\mathbb{R}^{n\times 3}$ are chosen randomly.

    Figure~\ref{fig:ex1_sylv} shows the convergence of the relative residual norms for the matrix-oriented BiCGSTAB and factorized BiCGSTAB methods.
    The two methods produce the same convergence history in the early iterations, whereas in the later iterations the residual norms oscillate and the curves no longer coincide. This is likely due to the different propagation of rounding errors in the two implementations. Even so, the two methods show comparable convergence behavior overall.
    \begin{figure}[htbp]
        \centering
        \includegraphics[width=0.7\linewidth]{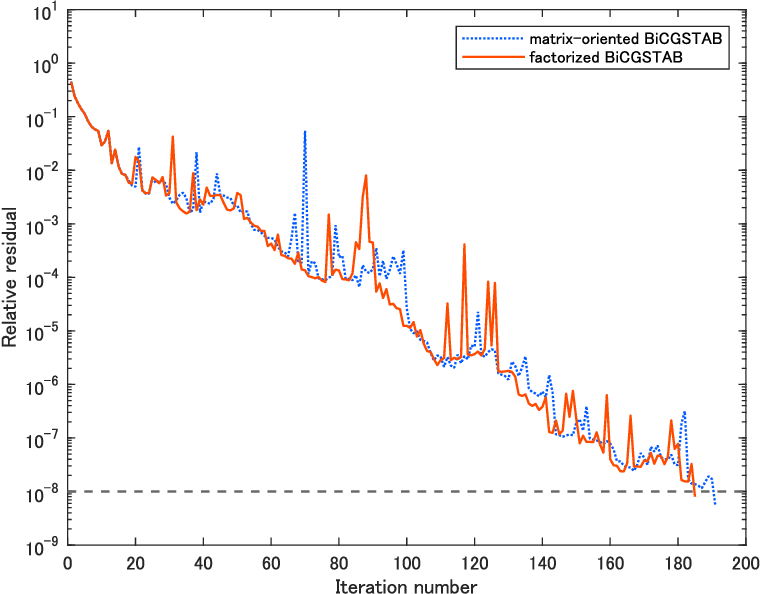}
        \caption{Convergence histories of the matrix-oriented BiCGSTAB method and the factorized BiCGSTAB method for Example~\ref{ex2}.}
        \label{fig:ex1_sylv}
    \end{figure}
\end{example}

\subsection{Comparison with the truncated Krylov subspace methods}\label{subsec:num-comp}
We next compare the factorized Krylov subspace methods with truncated Krylov subspace methods, which are the matrix-oriented Krylov subspace methods that employ low-rank truncation, in terms of computational time and the number of iterations for convergence.
The truncated Krylov subspace methods used for comparison are obtained by applying~\cite[Algorithms 1, 2]{benner2013} or~\cite[Algorithms 2, 3]{kressner2011} to the Sylvester or Lyapunov equation. 
The resulting truncated CG method for~\eqref{eq:lyap} and the truncated BiCGSTAB method for~\eqref{eq:sylvester}, used in Examples~\ref{ex3} and~\ref{ex4}, respectively, are detailed in Algorithms~\ref{alg:truncCG} and~\ref{alg:truncBiCGSTAB} in the appendix.
Note that no preconditioner is applied to these algorithms so that the truncated Krylov subspace methods are mathematically equivalent to the factorized Krylov subspace methods except for the low-rank truncation.
In our experiments, the truncation parameter is set to $\varepsilon_\mathcal{T}=10^{-8},10^{-10},10^{-12}$.
The threshold for convergence is set to $\varepsilon_\mathrm{tol}=10^{-6}$ in this subsection.
\begin{example}\label{ex3}
    The test matrix $A$ is the same as in Example~\ref{ex1}. The matrix $C_1\in\mathbb{R}^{n\times s}$ is chosen randomly, and we perform the experiments with $s=1,2,3,4,5$.
    As described in Subsection~\ref{subsec:Lyap}, the factorized CG method performed computations efficiently by exploiting the symmetry. 
    The truncated CG method also utilized this property for efficient computation.
    In this example, optional truncation of $R_k$ in the truncated CG method is enabled (line 9 of Algorithm~\ref{alg:truncCG}), while that of $Q_k$ is disabled (line 5 of Algorithm~\ref{alg:truncCG}).
    We chose this setting because we confirmed that this setting achieved the shortest or comparable computational time for each $s$ among all combinations of the optional truncations.
    \begin{figure}[htbp]
        \centering
        \includegraphics[width=0.7\linewidth]{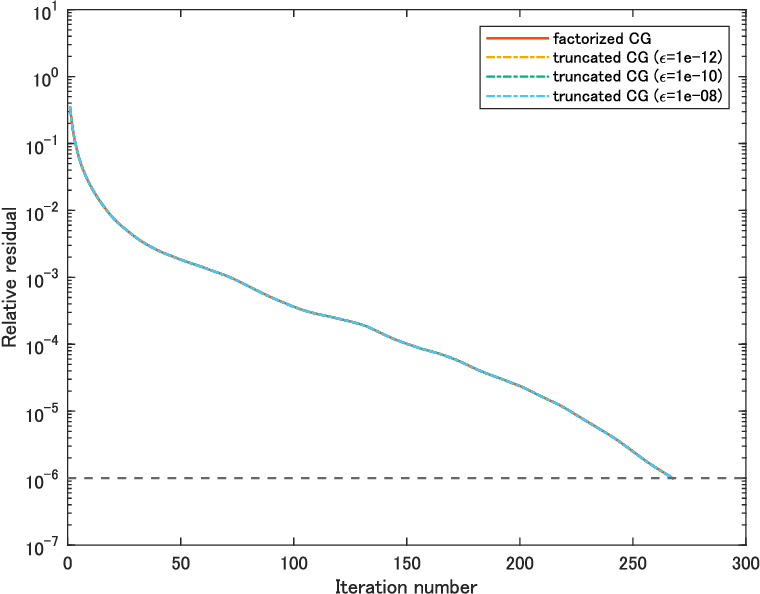}
        \caption{Convergence histories of the factorized CG method and the truncated CG methods ($\varepsilon_\mathcal{T}=10^{-8}, 10^{-10}, 10^{-12}$) for Example~\ref{ex3} with $s=3$.}
        \label{fig:ex3}
    \end{figure}

    Figure~\ref{fig:ex3} presents the convergence histories for the case $s=3$.
    The figure shows that the convergence behavior is identical among the factorized CG method and the truncated CG methods (with three different parameter settings).
    Similar behavior was observed for the other tested ranks, namely $s=1,2,4,5$.

    Table~\ref{tab:ex3} summarizes the iteration number and computational time of each method until convergence for each $s$.
    A breakdown of the computational time for each method is also provided.
    In each setting of $s$, while the iteration numbers are the same for all methods, the factorized CG method is faster than the truncated CG methods in terms of computational time. 
    This significant difference is primarily attributed to the fact that the factorized CG method does not require low-rank truncation that the truncated CG method must perform to avoid excessive memory consumption.
    An additional factor is that the basic operations in the factorized CG method reduce to small matrix computations, whereas the corresponding operations in the truncated CG method are carried out using a low-rank format, which is more expensive.
    For reference, the computational cost of each operation in both the factorized and truncated CG methods is summarized in Table~\ref{tab:cost-cg} in Appendix~\ref{appendix:B}.

    \begin{table}[htbp]
        \centering
        \small
        \setlength{\tabcolsep}{4pt}
        \begin{tabular}{llcccc}
        \toprule
        \multirow{2}{*}{$s$}
          & \multirow{3}{*}{} & \multirow{1}{*}{Factorized CG}
          & \multicolumn{3}{c}{Truncated CG} \\
         \cmidrule(lr){4-6}
          & & & {$\varepsilon_\mathcal{T}=10^{-8}$} & {$\varepsilon_\mathcal{T}=10^{-10}$} & {$\varepsilon_\mathcal{T}=10^{-12}$} \\
        \midrule
        
        % ----- s = 1 -----
        \multirow{5}{*}{1}
          & Number of iterations    & 270  & 270   & 270   & 270   \\
          & Total time              & 0.17 & 25.89 & 27.19 & 27.98 \\
          & ~ Basic operations      & 0.14 &  7.52 & 7.67  & 7.87  \\
          & ~ Block Lanczos process & 0.03 & -     & -     & -     \\
          & ~ Low-rank truncations  & -    & 18.37 & 19.51 & 20.10 \\
        \midrule
        
        % ----- s = 2 -----
        \multirow{5}{*}{2}
          & Number of iterations            & 270  & 270   & 270   & 270   \\
          & Total time               & 1.17 & 93.32 & 90.75 & 89.04 \\
          & ~ Basic operations      & 1.09 & 20.02 & 18.88 & 18.62 \\
          & ~ Block Lanczos process & 0.08 & -     & -     & -     \\
          & ~ Low-rank truncations  & -    & 73.28 & 71.86 & 70.41 \\
        \midrule
        
        % ----- s = 3 -----
        \multirow{5}{*}{3}
          & Number of iterations            & 268  & 268    & 268    & 268    \\
          & Total time              & 2.78 & 182.31 & 191.66 & 183.77 \\
          & ~ Basic operations      & 2.64 &  33.50 &  33.71 &  32.36 \\
          & ~ Block Lanczos process & 0.14 & -      & -      & -      \\
          & ~ Low-rank truncations  & -    & 148.79 & 157.94 & 151.36 \\
        \midrule
        
        % ----- s = 4 -----
        \multirow{5}{*}{4}
          & Number of iterations            & 268  & 268    & 268    & 268    \\
          & Total time              & 5.06 & 310.84 & 322.44 & 322.09 \\
          & ~ Basic operations      & 4.88 &  51.48 &  51.26 &  50.72 \\
          & ~ Block Lanczos process & 0.18 & -      & -      & -      \\
          & ~ Low-rank truncations  & -    & 259.35 & 271.17 & 271.36 \\
        \midrule
        
        % ----- s = 5 -----
        \multirow{5}{*}{5}
          & Number of iterations            & 271  & 271    & 271    & 271    \\
          & Total time              & 8.20 & 485.18 & 510.43 & 525.21 \\
          & ~ Basic operations      & 7.97 &  73.77 &  73.73 &  74.72 \\
          & ~ Block Lanczos process & 0.23 & -      & -      & -      \\
          & ~ Low-rank truncations  & -    & 411.39 & 436.68 & 450.47 \\
        \bottomrule
        \end{tabular}
        \label{tab:ex3}
        \caption{Number of iterations and breakdown of computational time (in seconds) of the factorized and truncated CG methods for Example~\ref{ex3}. ``Basic operations'' includes the time for the convergence check.}
    \end{table}
\end{example}

\begin{example}\label{ex4}
    We use the same matrices $A$ and $B$ as in Example~\ref{ex2}, while changing the matrix size to $n=15{,}625~(=25^3)$.
    The matrices $C_1,C_2\in\mathbb{R}^{n\times s}$ are chosen randomly, and we perform the experiments with $s=1,2,3,4,5$.
    For updating $R_k$ in the truncated BiCGSTAB method, while the existing studies have used the explicit formula (Variant 2 in Algorithm~\ref{alg:truncBiCGSTAB}) to prevent early stagnation of the residual, we adopt the recursion formula (Variant 1 in Algorithm~\ref{alg:truncBiCGSTAB}) because it converged in fewer iterations and resulted in shorter computational time.
    In this example, optional truncations of $Q_k$ and $S_k$ in the truncated BiCGSTAB method are enabled (lines 6 and 8 of Algorithm~\ref{alg:truncBiCGSTAB}), while that of $T_k$ is disabled (line 9 of Algorithm~\ref{alg:truncBiCGSTAB}). 
    This setting was chosen for the same reason as in Example~\ref{ex3}.

    Figure~\ref{fig:ex4} shows the convergence histories of the factorized BiCGSTAB method and the truncated BiCGSTAB methods (with three different parameter settings) for $s=3$. 
    For the first approximately 15 iterations, the behavior of all methods appears to be nearly identical, similar to Example~\ref{ex3}. 
    However, small differences gradually emerge among the methods, although the number of iterations until convergence remains comparable across all methods. 
    Similar trends were observed for the other tested ranks.
        
    Computational times and iteration numbers until convergence are summarized in Table~\ref{tab:ex4}.
    For each $s$, we confirmed that the factorized BiCGSTAB method is faster than the truncated BiCGSTAB methods in terms of computational time, despite a similar number of iterations.
    The reasons for the greater reduction in computational time relative to the reduction in the number of iterations are essentially the same as in Example~\ref{ex3}: the factorized BiCGSTAB method does not perform low-rank truncation, and its basic operations are more efficient than those in the truncated BiCGSTAB methods.
    However, the reduction in computational time is less pronounced than that in Example~\ref{ex3}, because the block Arnoldi process dominates the computational time due to its increasing cost per iteration, unlike the block Lanczos process used in the factorized CG method.
    As in the CG case, the computational cost of each operation in both the factorized and truncated BiCGSTAB methods is summarized in Table~\ref{tab:cost-bicgstab} in Appendix~\ref{appendix:B}.

    \begin{figure}[htbp]
        \centering
        \includegraphics[width=0.7\linewidth]{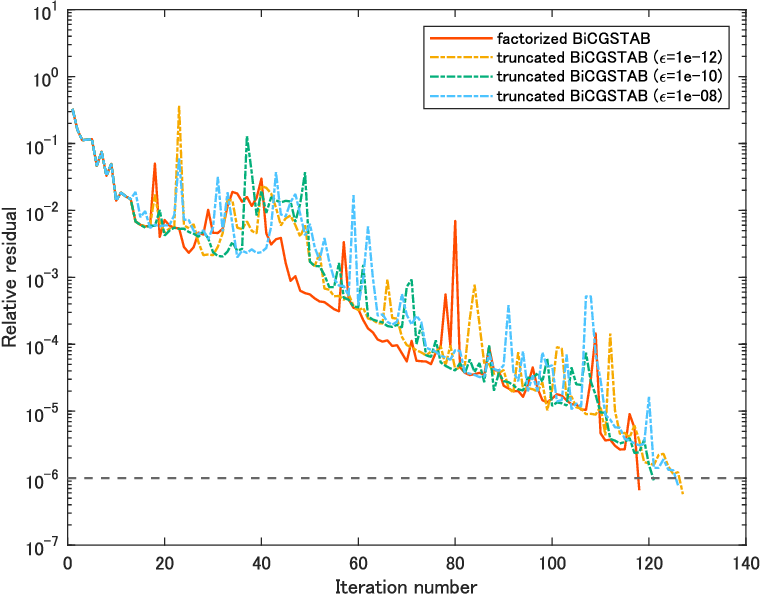}
        \caption{Convergence histories of the factorized BiCGSTAB method and the truncated BiCGSTAB methods ($\varepsilon_\mathcal{T}=10^{-8}, 10^{-10}, 10^{-12}$) for Example~\ref{ex4} with $s=3$.}
        \label{fig:ex4}
    \end{figure}

        \begin{table}[htbp]
        \centering
        \small  
        \setlength{\tabcolsep}{4pt}
        \begin{tabular}{llccccc}
        \toprule
        \multirow{2}{*}{$s$}
      & \multirow{2}{*}{} & \multirow{2}{*}{\makecell{Factorized \\ BiCGSTAB}}
          & \multicolumn{3}{c}{Truncated BiCGSTAB} \\
         \cmidrule(lr){4-6}
          & & & {$\varepsilon_\mathcal{T}=10^{-8}$} & {$\varepsilon_\mathcal{T}=10^{-10}$} & {$\varepsilon_\mathcal{T}=10^{-12}$} \\
        \midrule

        % ----- s = 1 -----
        \multirow{5}{*}{1}
          & Number of iterations            &  121 &   132 &   126 &   127 \\
          & Total time              & 2.07 & 11.80 & 12.48 & 14.76 \\
          & ~ Basic operations      & 0.22 &  3.38 &  3.57 &  4.01 \\
          & ~ Block Arnoldi process & 1.84 & -     & -     & -     \\
          & ~ Low-rank truncations  & -    &  8.41 &  8.90 & 10.74 \\
        \midrule
        
        % ----- s = 2 -----
        \multirow{5}{*}{2}
          & Number of iterations            &  112 &   124 &   120 &   115 \\
          & Total time              & 4.31 & 21.21 & 23.35 & 24.09 \\
          & ~ Basic operations      & 0.67 &  5.54 &  6.11 &  6.29 \\
          & ~ Block Arnoldi process & 3.64 & -     & -     & -     \\
          & ~ Low-rank truncations  & -    & 15.66 & 17.23 & 17.79 \\
        \midrule
        
        % ----- s = 3 -----
        \multirow{5}{*}{3}
          & Number of iterations            &  118 &   126 &   121 &   127 \\
          & Total time              & 9.03 & 36.21 & 41.41 & 46.55 \\
          & ~ Basic operations      & 1.91 &  8.74 &  9.67 & 10.49 \\
          & ~ Block Arnoldi process & 7.12 & -     & -     & -     \\
          & ~ Low-rank truncations  & -    & 27.45 & 31.72 & 36.05 \\
        \midrule
        
        % ----- s = 4 -----
        \multirow{5}{*}{4}
          & Number of iterations            &   116 &   117 &   128 &   123 \\
          & Total time              & 11.87 & 44.76 & 59.97 & 63.89 \\
          & ~ Basic operations      &  3.28 & 10.20 & 12.86 & 13.18 \\
          & ~ Block Arnoldi process &  8.58 & -     & -     & -     \\
          & ~ Low-rank truncations  & -     & 34.55 & 47.09 & 50.69 \\
        \midrule
        
        % ----- s = 5 -----
        \multirow{5}{*}{5}
          & Number of iterations            &   112 &   125 &   119 &   118 \\
          & Total time              & 15.26 & 72.29 & 82.04 & 97.86 \\
          & ~ Basic operations      &  5.18 & 14.81 & 15.93 & 17.63 \\
          & ~ Block Arnoldi process & 10.08 & -     & -     & -     \\
          & ~ Low-rank truncations  & -     & 57.46 & 66.09 & 80.21 \\
        \bottomrule
        \end{tabular}
        \label{tab:ex4}
        \caption{Number of iterations and breakdown of computational time (in seconds) of the factorized and truncated BiCGSTAB methods for Example~\ref{ex4}. ``Basic operations'' includes the time for the convergence check.}
    \end{table}
\end{example}

%% ================================================ %%
\section{Conclusions}\label{sec:conclusions}
In this paper, we have developed factorized Krylov subspace methods, which are low-rank versions of matrix-oriented Krylov subspace methods specifically designed for large-scale Sylvester equations with low-rank right-hand sides.
A key feature of the proposed approach is that we have shown the low-rank structure of the matrices arising in the matrix-oriented Krylov subspace methods, and based on this structure, we have reformulated the algorithms, thereby avoiding the explicit formation of large dense matrices without resorting to truncation.
Several numerical examples have demonstrated the computational efficiency of the proposed methods for large-scale Sylvester equations.

While this paper has focused on the CG and BiCGSTAB methods as the base methods for the factorized Krylov subspace methods, the proposed framework is expected to extend naturally to other Krylov subspace methods, such as the CGS and GPBiCG methods. 
Since it is difficult to apply a preconditioning technique to the proposed methods, the rank may grow substantially for ill-conditioned problems without achieving convergence within a small number of iterations. Developing improvements to handle such problems remains a topic for future work.

\section*{Acknowledgments}
This work was supported by JSPS KAKENHI Grant Number JP25K21213.

% --- Appendix --- %
\appendix
\section{The truncated CG and BiCGSTAB methods}\label{appendix:A}
The truncated CG method for the Lyapunov equation in Example~\ref{ex3} and the truncated BiCGSTAB method for the Sylvester equation in Example~\ref{ex4} are shown in Algorithm~\ref{alg:truncCG} and Algorithm~\ref{alg:truncBiCGSTAB}, respectively.
The symbol $\mathcal{T}$ denotes the truncation operator, which is defined in~\cite[p.5]{kressner2011}.
Note that the approximate solution $X_{k}$ and matrices $R_k$, $P_k$, $Q_k$, $S_k$, $T_k$ are stored in low-rank form and all operations involving low-rank matrices (addition, inner product, and the Sylvester operator) are performed efficiently by leveraging the low-rank structure.
Moreover, in Algorithm~\ref{alg:truncCG}, we also exploit the symmetry of $X_k$, $R_k$, $P_k$, and $Q_k$ to reduce the computational cost and memory consumption, as described in~\cite{kressner2014, simoncini2023}.
In our numerical experiments, the low-rank truncation $\mathcal{T}$ is also computed efficiently by exploiting symmetry.

\begin{algorithm}
\caption{Truncated CG method for solving~\eqref{eq:lyap}}
\label{alg:truncCG}
\begin{algorithmic}[1]
\REQUIRE $A\in\mathbb{R}^{n\times n},C_1\in\mathbb{R}^{n\times s},\varepsilon_\mathrm{tol}$, $\varepsilon_\mathcal{T}$
\ENSURE $X_{k+1}$ in low-rank format 
\STATE{Set initial guess $X_0$}
\STATE{Compute $R_0=C_1C_1^\top-AX_0-X_0 A^\top$}
\STATE{$P_0=R_0$}
\FOR{$k=0,1,2,\ldots$}
\STATE{$Q_{k} = AP_{k}+P_{k}A^\top$, \hfill Optionally: $Q_{k}\leftarrow\mathcal{T}(Q_{k})$}
\STATE{$\xi_{k}=\langle P_{k}, Q_{k}\rangle$}
\STATE{$\alpha_k=\langle R_k, P_k \rangle/\xi_k$}
\STATE{$X_{k+1}=X_k + \alpha_k P_k$, \hfill $X_{k+1}\leftarrow\mathcal{T}(X_{k+1})$}
% \STATE{$R_{k+1}=R_k - \alpha_k Q_k$, \hfill Optionally: $R_{k+1}\leftarrow\mathcal{T}(R_{k+1})$}
\STATE{$R_{k+1}=C_1C_1^\top - AX_{k+1} - X_{k+1}A^\top$, \hfill Optionally: $R_{k+1}\leftarrow\mathcal{T}(R_{k+1})$}
\IF{$\|R_{k+1}\|\le \|R_0\|\cdot \varepsilon_\mathrm{tol}$}
\RETURN $X_{k+1}$
\ENDIF
\STATE{$\beta_k=-\langle R_{k+1}, Q_k\rangle/\xi_k$}
\STATE{$P_{k+1}=R_{k+1} + \beta_k P_k$, \hfill $P_{k+1}\leftarrow\mathcal{T}(P_{k+1})$}
\ENDFOR
\end{algorithmic}
\end{algorithm}

\begin{algorithm}
\caption{Truncated BiCGSTAB method for solving~\eqref{eq:sylvester}}
\label{alg:truncBiCGSTAB}
\begin{algorithmic}[1]
\REQUIRE $A\in\mathbb{R}^{n\times n},B\in\mathbb{R}^{m\times m},C_1\in\mathbb{R}^{n\times s}, C_2\in\mathbb{R}^{m\times s},\varepsilon_\mathrm{tol}$, $\varepsilon_\mathcal{T}$
\ENSURE $X_{k+1}\in\mathbb{R}^{n\times m}$ in low-rank format
\STATE{Set initial guess $X_0$}
\STATE{Compute $R_0=C_1C_2^\top-AX_0-X_0 B$}
\STATE{Set an arbitrary matrix $\tilde{R}_0$ s.t. $\langle R_0,\tilde{R}_0\rangle\ne 0$, e.g., $\tilde{R}_0=R_0$}
\STATE{$P_0=R_0$, $\rho_0=\langle R_0, \tilde{R}_0\rangle$}
\FOR{$k=0,1,2,\ldots$}
\STATE{$Q_{k} = AP_{k}+P_{k}B$, \hfill Optionally: $Q_{k}\leftarrow\mathcal{T}(Q_{k})$}
\STATE{$\alpha_k=\rho_k/\langle \tilde{R}_0, Q_k \rangle$}
\STATE{$S_k=R_k - \alpha_kQ_k$, \hfill Optionally: $S_{k}\leftarrow\mathcal{T}(S_{k})$}
\STATE{$T_{k} = AS_{k}+S_{k}B$, \hfill Optionally: $T_{k}\leftarrow\mathcal{T}(T_{k})$}
\STATE{$\omega_k = \langle T_k,S_k\rangle/\langle T_k,T_k\rangle$}
\STATE{$X_{k+1}=X_k + \alpha_k P_k + \omega_k S_k$, \hfill $X_{k+1}\leftarrow\mathcal{T}(X_{k+1})$}
% \STATE{$R_{k+1}=S_k - \omega_k T_k$ \hfill  $R_{k+1}\leftarrow\mathcal{T}(R_{k+1})$}
\STATE{Variant 1: $R_{k+1}=S_k - \omega_k T_k$, \hfill  $R_{k+1}\leftarrow\mathcal{T}(R_{k+1})$}
\STATE{Variant 2: $R_{k+1}=C_1C_2^\top - AX_{k+1} - X_{k+1}B$, \hfill Optionally: $R_{k+1}\leftarrow\mathcal{T}(R_{k+1})$}
\IF{$\|R_{k+1}\|\le \|R_0\|\cdot \varepsilon_\mathrm{tol}$}
\RETURN $X_{k+1}$
\ENDIF
\STATE{$\rho_{k+1}=\langle \tilde{R}_0, R_{k+1}\rangle$}
\STATE{$\beta_k=\frac{\alpha_k}{\omega_k}\cdot\frac{\rho_{k+1}}{\rho_k}$}
\STATE{$P_{k+1}=R_{k+1} + \beta_k (P_k - \omega_k Q_k)$, \hfill $P_{k+1}\leftarrow\mathcal{T}(P_{k+1})$}
\ENDFOR
\end{algorithmic}
\end{algorithm}

\section{Comparison of computational costs between the factorized and truncated Krylov subspace methods}\label{appendix:B}
Tables~\ref{tab:cost-cg} and~\ref{tab:cost-bicgstab} summarize the computational cost of each operation on low-rank matrices in the CG method for~\eqref{eq:lyap} and the BiCGSTAB method for~\eqref{eq:sylvester}, respectively.
For simplicity, we assume that the low-rank matrices have size $n \times n$ and rank $r$, where rank refers to the number of columns of the tall-skinny factors in the low-rank format.
In Table~\ref{tab:cost-bicgstab}, $k$ denotes the step index of the block Arnoldi process, not the iteration index of BiCGSTAB method.
The cost of the addition in the truncated Krylov subspace methods is zero because the operation is performed by stacking tall-skinny matrices horizontally.
\begin{table}[htbp]
  \centering
  \label{tab:cost-cg}
  \begin{tabular}{lcc}
    \toprule
    Operation & Factorized CG & Truncated CG \\
    \midrule
    Addition              & $\mathcal{O}(r^2)$  & 0                          \\
    Inner product         & $\mathcal{O}(r^2)$  & $\mathcal{O}(nr^2)$        \\
    Lyapunov operator    & $\mathcal{O}(r^3)$  & $\mathcal{O}(\mathrm{nnz}(A)\cdot r)$  \\
    \midrule
    Low-rank truncation   & ---                                           & $\mathcal{O}(nr^2 + r^3)$ \\
    Block Lanczos process & $\mathcal{O}(\mathrm{nnz}(A)s + ns^2)$  & ---         \\
    \bottomrule
  \end{tabular}
  \caption{Computational cost of each operation in the factorized and truncated CG methods for~\eqref{eq:lyap}.}
\end{table}
\begin{table}[htbp]
  \centering
  \label{tab:cost-bicgstab}
  \begin{tabular}{lcc}
    \toprule
    Operation & Factorized BiCGSTAB & Truncated BiCGSTAB  \\
    \midrule
    Addition              & $\mathcal{O}(r^2)$  & 0                     \\
    Inner product         & $\mathcal{O}(r^2)$  & $\mathcal{O}(nr^2)$  \\
    Sylvester operator    & $\mathcal{O}(r^3)$  & $\mathcal{O}((\mathrm{nnz}(A)+\mathrm{nnz}(B))r)$ \\
    \midrule
    Low-rank truncation   & ---  & $\mathcal{O}(nr^2 + r^3)$   \\
    Block Arnoldi process & $\mathcal{O}((\mathrm{nnz}(A)+\mathrm{nnz}(B))s + kns^2)$  & ---    \\
    \bottomrule
  \end{tabular}
  \caption{Computational cost of each operation in the factorized and truncated BiCGSTAB methods for~\eqref{eq:sylvester}.}
\end{table}

Note that the rank $r$ differs across matrices and also differs between the factorized and truncated Krylov subspace methods, so the tables do not allow a straightforward comparison of these methods.
However, for comparable ranks, each operation in the factorized Krylov subspace methods is less expensive than the inner product or the low-rank truncation in the truncated Krylov subspace methods, which is consistent with the numerical results in Subsection~\ref{subsec:num-comp}.

\bibliographystyle{siam}
\bibliography{references}
\end{document}